\theoremstyle{plain}
\numberwithin{equation}{section}
\newtheorem{theorem}{Theorem}[section]
\newtheorem{corollary}[theorem]{Corollary}
\newtheorem{lemma}[theorem]{Lemma}
\newtheorem{conj}{Conjecture}[section]
\newtheorem{remark}{Remark}[section]
\DeclareMathOperator{\spec}{spec}
\DeclareMathOperator{\sgn}{sgn}
\DeclarePairedDelimiter\floor{\lfloor}{\rfloor}
\title{ Signed spectral Tura\'{n} type theorems}
\author{M. Rajesh Kannan\thanks{Department of Mathematics, Indian Institute of Technology Hyderabad, Hyderabad 502284, India. Email: rajeshkannan@math.iith.ac.in, rajeshkannan1.m@gmail.com }\and Shivaramakrishna Pragada\thanks{Department of Mathematics, Simon Fraser University, Burnaby, BC, V5A 1S6, Canada. Email:  shivaramkratos@gmail.com }
}
\begin{document}

    \maketitle
    % {
    %   \hypersetup{linkcolor=black}
    %   \tableofcontents
    % }

    % ---------------------------------
    % Table of contents for comments
    % \listoftodos[\textsf{Comments}]
    % ---------------------------------

\begin{abstract}
A signed graph $\Sigma = (G, \sigma)$ is a graph where the function $\sigma$ assigns either $1$ or $-1$ to each edge of the simple graph $G$.  The adjacency matrix of $\Sigma$, denoted by $A(\Sigma)$, is defined canonically. In a recent paper, Wang et al. extended the spectral bounds of Hoffman and Cvetkovi\'{c}  for the chromatic number of signed graphs. They proposed an open problem related to the balanced clique number and the largest eigenvalue of a signed graph. We solve a strengthened version of this open problem. As a  byproduct, we give alternate proofs for some of the known classical bounds for the least eigenvalues of unsigned graphs. We extend the Tur\'{a}n's inequality for the signed graphs. Besides, we study the  Bollob\'{a}s and Nikiforov conjecture for the signed graphs and show that the conjecture need not be true for the signed graphs. Nevertheless, the conjecture holds for signed graphs under some assumptions. Finally, we study some of the relationships between the number of signed walks and the largest eigenvalue of a signed graph.
\end{abstract}
\textbf{Keywords.} Signed graph, Eigenvalue, Balanced clique number, Edge bipartiteness, Signed walk.\\

{\bf AMS Subject Classification(2010):}    05C22, 05C50.

% --------------------------------------
\section{Introduction}

A signed graph $\Sigma$ is a pair $(G, \sigma)$, where $G =(V, E)$ is an undirected graph, called the underlying unsigned graph, and $\sigma:E\rightarrow\{-1, +1\}$ is the sign function. Let $e_{ij}$ denote the edge between the vertices $v_i$ and $v_j$ of $G$. The \emph{adjacency matrix} of a signed graph $ \Sigma=(G, \sigma)$ is  a symmetric  matrix, denoted by $ A(\Sigma)$ and its $ (i,j)$-th  entry is defined as follows:
$$a_{ij}=\begin{cases}
    \sigma(e_{ij})&\text{if} \mbox{ $v_i\sim v_j$},\\
    0&\text{otherwise.}\end{cases}$$

    Let $\lambda_1(\Sigma) \geq \lambda_2(\Sigma) \geq \dots \geq \lambda_n(\Sigma) $ be the eigenvalues of $A(\Sigma)$. The spectrum of $A(\Sigma)$ is the multiset of eigenvalues of $A(\Sigma)$.
    The spectrum and the spectral radius of $\Sigma$ are the spectrum and the spectral radius of $ A(\Sigma) $  and denoted by $\spec(\Sigma)$ and $\rho(\Sigma)$, respectively. Let $(G, 1)$ and $(G, -1)$ denote the signed graphs with all the edge signs are equal to $1$ and $-1$, respectively.  For more details about the notion of signed graphs, we refer to \cite{harary1955, zas1982,zassurvey}.

%   The degree of the vertex $v_j$ is denoted by $d_j$. By slight abuse of notation, we write $\sigma(e_{ij})$ as only $\sigma_{ij}$, to represent the gain on edge $e_{ij}$. We define a diagonal matrix $\mathbf{D}(G) = \diag(d_1, d_2, ..., d_n )$, where $d_i$ is the degree of vertex $v_i$ in the underlying graph $G$. The  Laplacian matrix $\mathbf{L}(\Sigma)$ is defined as $\mathbf{L}(\Sigma) = \mathbf{D}(G)-\mathbf{A}(\Sigma)$, where $\mathbf{D}(G) = \diag(d_1, d_2, ..., d_n )$ is a diagonal matrix and $d_i$ is the degree of vertex $v_i$ in the underlying graph $G$. It is clear from the above definition that $\mathbf{L}(\Sigma)$ is symmetric and positive semi-definite.

The \textit{sign of a cycle }(with some orientation) $C = v_1 v_2 \ldots v_l v_1$, denoted by $\sigma(C)$, is defined as the product of the signs of its edges, that is $$\sigma(C) = \sigma(e_{12}) \sigma(e_{23}) \cdots \sigma(e_{(l-1)l}) \sigma(e_{l1}),$$ where $e_{ij}$ is the edge between the vertices $v_i$ and $v_j$.
A cycle $C$ is  \textit{neutral} if $\sigma(C) = 1$, and a signed graph is  \textit{balanced} if all its cycles are neutral. A function from the vertex set of $G$ to the set $\{1 ,-1\}$ is called a \textit{switching function}. Two signed graphs $\Sigma_1 = (G, \sigma_1)$ and $\Sigma_2 = (G, \sigma_2)$ are \textit{switching equivalent}, written as $\Sigma_1 \sim \Sigma_2$, if there is a switching function $\eta: V \to \{1 ,-1\}$ such that $$\sigma_2(e_{ij})=\eta(v_i)^{-1}\sigma_1(e_{ij})\eta(v_j).$$
The switching equivalence of two signed graphs can be defined in the following equivalent way: Two signed graphs $\Sigma_1 = (G, \sigma_1)$ and $\Sigma_2 = (G, \sigma_2)$ are \textit{switching equivalent}, if there exists
a diagonal matrix $D_{\eta}$ with diagonal entries from $\{1, -1\}$ such that
\begin{align}\label{eq: switching equi}
    A(\Sigma_2) = D_{\eta}^{-1}A(\Sigma_1)D_{\eta}.
\end{align}
Switching equivalence preserves connectivity and balance.

%The least Laplacian eigenvalue has a special role in the spectral theory of signed graphs. In fact, if the least eigenvalue is zero, then $\Sigma = (G, \sigma)$ is switching equivalent to $(G, 1)$, and $\mathbf{L}(\Sigma)$ is similar to $\mathbf{L}(G)$.  Similarly, if $\Sigma$ is switching equivalent to $(G, -)$, then $\mathbf{L}(\Sigma)$ is similar to $\mathbf{Q}(G) =\mathbf{D}(G) +\mathbf{A}(G)$, the signless Laplacian of $G$, and then we have the signless Laplacian theory of (usual) graphs \cite{kannan2020normalized, reff2012spectral}.

A complete subgraph of an unsigned graph $G$ is a called a \textit{clique} in $G$. The \textit{clique number} of $G$, denoted by $\omega(G)$, is the number of vertices in a largest clique in $G$.  Let $S_{n}^{+} =  \Big\{(x_1, \dots, x_n) \in \mathbb{R}^n : \sum x_i = 1,   x_i \geq 0 ~\mbox{for }~ i = 1 ,\dots, n \Big\}$ and $S_n^{\pm} = \Big\{ (x_1, \dots, x_n)\in \mathbb{R}^n: \sum_{i=1}^{n} \vert x_i \vert =1\Big\}.$

In the seminal paper \cite{motzkin1965maxima}, Motzkin and Straus proved the following  result.
\begin{theorem} [{\cite[Theorem 1]{motzkin1965maxima}}]\label{MS theorem}
    Let  $G$ be a graph on $n$ vertices with the clique number  $\omega(G)$. If $A(G)$ is the adjacency matrix $G$, then$$ \max_{x\in S_n^{+}} x^TA(G)x = \frac{\omega(G)-1}{\omega(G)}. $$
\end{theorem}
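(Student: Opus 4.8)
The plan is to prove the two inequalities separately. For the lower bound $\max_{x\in S_n^+} x^T A(G)x \ge \frac{\omega(G)-1}{\omega(G)}$, I would exhibit an explicit feasible point. Fixing a clique $K$ on $\omega := \omega(G)$ vertices, set $x_i = 1/\omega$ for $i \in K$ and $x_i = 0$ otherwise; this lies in $S_n^+$. Since $x^T A(G) x = 2\sum_{\{i,j\}\in E} x_i x_j$ and all $\binom{\omega}{2}$ pairs inside $K$ are edges, a direct count gives $x^T A(G) x \ge 2\binom{\omega}{2}\omega^{-2} = \frac{\omega-1}{\omega}$, which settles this direction.

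The substance is the reverse inequality. First I note that $S_n^+$ is compact and $x \mapsto x^T A(G) x$ is continuous, so a maximizer exists; among all maximizers I would choose one, say $x^*$, whose support $\supp(x^*)$ has smallest possible cardinality. The key claim is that $\supp(x^*)$ induces a clique. To see this, suppose $i, j \in \supp(x^*)$ are \emph{non-adjacent}, and consider the line $x^* + t(e_i - e_j)$, which stays in the affine hyperplane $\sum_k x_k = 1$. Because $A(G)$ has zero diagonal and $A(G)_{ij} = 0$, the direction $v = e_i - e_j$ satisfies $v^T A(G) v = A_{ii} - 2A_{ij} + A_{jj} = 0$, so the objective restricted to this line is \emph{linear} in $t$. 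Hence one may push $t$ to whichever endpoint of the feasible interval $[-x_i^*, x_j^*]$ does not decrease the objective; at that endpoint either $x_i^*$ or $x_j^*$ becomes zero, producing a maximizer of strictly smaller support and contradicting minimality. Therefore every pair of vertices in $\supp(x^*)$ is joined by an edge.

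Finally, once $x^*$ is supported on a clique of some size $k \le \omega$, I would evaluate the form directly using the identity $2\sum_{\{i,j\}\subseteq \supp(x^*)} x_i^* x_j^* = \big(\sum_i x_i^*\big)^2 - \sum_i (x_i^*)^2 = 1 - \sum_i (x_i^*)^2$. Cauchy--Schwarz (equivalently the power-mean inequality) gives $\sum_i (x_i^*)^2 \ge 1/k$, so the maximum value equals $1 - \sum_i (x_i^*)^2 \le 1 - 1/k \le 1 - 1/\omega = \frac{\omega-1}{\omega}$. Combined with the lower bound, the two estimates match exactly.

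I expect the main obstacle to be the support-reduction step: verifying that non-adjacency makes the quadratic form linear along $e_i - e_j$, and arguing carefully that mass can always be pushed to an endpoint without loss, so that a minimal-support maximizer is forced to be clique-supported. The endpoint bookkeeping and the clean reduction to the identity $1 - \sum_i (x_i^*)^2$ are where the care is needed; everything else is routine.
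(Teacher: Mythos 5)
This statement is Theorem~1 of Motzkin and Straus, which the paper imports by citation and does not prove, so there is no in-paper argument to compare against. Your proof is correct and self-contained: the lower bound via the uniform vector on a maximum clique is right, and the upper bound via a minimal-support maximizer is the standard convexity argument --- the observation that $(e_i-e_j)^T A (e_i-e_j)=0$ for a non-edge $\{i,j\}$ makes the objective affine along that direction, so mass can be shifted to an endpoint without loss, forcing a clique-supported maximizer, after which the identity $2\sum_{i<j}x_ix_j = 1-\sum_i x_i^2 \le 1-1/k$ finishes the job. (The original Motzkin--Straus paper argues by induction on $n$; your support-reduction route is the now-classical alternative and arguably cleaner.) The only bookkeeping worth making explicit is that the endpoint of the interval $[-x_i^*,x_j^*]$ you move to attains a value at least that of $t=0$, hence is itself a maximizer with strictly smaller support, which is exactly the contradiction you need; you have stated this correctly.
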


Using this result, Nikiforov came up with a proof of the following theorem, which was conjectured by Edwards and Elphick \cite{edwars-elphick-1983}.

\begin{theorem}[{\cite[Theorem 2.1]{nikiforov2002some}}]\label{nikiforov_2m}
    Let  $G$ be a graph with $n$ vertices and $m$ edges.  Let $\lambda_1(G)$  be the largest  eigenvalue of  $A(G)$, and $\omega(G)$ be the clique number  of $G$.  Then $$ \lambda^2_1(G) \leq 2m\frac{\omega(G)-1}{\omega(G)}. $$
\end{theorem}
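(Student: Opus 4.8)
The plan is to derive the bound directly from the Motzkin--Straus theorem (Theorem~\ref{MS theorem}) by feeding it a cleverly chosen probability vector built from the Perron eigenvector of $A(G)$, and to bridge the gap between the Rayleigh quotient of $A(G)$ and the Motzkin--Straus functional with a single application of the Cauchy--Schwarz inequality.

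First I would fix a unit eigenvector $x = (x_1,\dots,x_n)$ of $A(G)$ associated with $\lambda_1(G)$. Since $A(G)$ is a nonnegative symmetric matrix, the Perron--Frobenius theorem guarantees that $x$ may be taken with $x_i \ge 0$ for all $i$ and $\sum_i x_i^2 = 1$. Expanding the Rayleigh quotient gives $\lambda_1(G) = x^T A(G) x = 2 \sum_{\{i,j\}\in E} x_i x_j$, the factor $2$ coming from the symmetry of $A(G)$.

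Next comes the key step. Applying Cauchy--Schwarz to the sum over the $m$ edges, pairing the all-ones vector against the vector $(x_i x_j)_{\{i,j\}\in E}$, yields
\[
\sum_{\{i,j\}\in E} x_i x_j \le \sqrt{m}\,\Big(\sum_{\{i,j\}\in E} x_i^2 x_j^2\Big)^{1/2}.
\]
Squaring the resulting inequality for $\lambda_1(G)$ gives $\lambda_1^2(G) \le 4m \sum_{\{i,j\}\in E} x_i^2 x_j^2 = 2m\,\big(2\sum_{\{i,j\}\in E} x_i^2 x_j^2\big)$. The heart of the argument is to recognize the parenthesized quantity as a Motzkin--Straus functional: setting $y_i = x_i^2$, the vector $y = (y_1,\dots,y_n)$ satisfies $y_i \ge 0$ and $\sum_i y_i = \sum_i x_i^2 = 1$, so $y \in S_n^{+}$, and $2\sum_{\{i,j\}\in E} x_i^2 x_j^2 = y^T A(G) y$.

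Finally I would invoke Theorem~\ref{MS theorem} to bound $y^T A(G) y \le \frac{\omega(G)-1}{\omega(G)}$, which combined with the previous display gives $\lambda_1^2(G) \le 2m\frac{\omega(G)-1}{\omega(G)}$, as desired. I expect no serious obstacle here: the only point requiring care is the passage to the nonnegative Perron eigenvector, which is needed so that the substitution $y_i = x_i^2$ lands in $S_n^{+}$ and so that the inequality $\lambda_1(G) = 2\sum x_i x_j$ is preserved under Cauchy--Schwarz applied to nonnegative quantities. The genuine content of the proof is the substitution $y_i = x_i^2$, which converts the quadratic form in $x$ controlling $\lambda_1(G)$ into the Motzkin--Straus quadratic form in $y$ controlled by the clique number.
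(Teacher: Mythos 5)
Your proof is correct and is essentially the same argument the paper uses: Theorem~\ref{nikiforov_2m} is stated there as a cited result, but the paper's proof of its signed generalization (Theorem~\ref{nikiforov-signed-general}) is exactly your argument — nonnegative top eigenvector, Cauchy--Schwarz over the edge set, then Motzkin--Straus applied to the squared coordinates — with an extra switching step that is vacuous in the all-positive case. No gaps.
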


The balanced clique number of a signed graph $\Sigma$, denoted by $\omega_b(\Sigma)$, is the maximum number of vertices in a balanced complete subgraph. The MS-index of a signed graph $\Sigma = (G, \sigma)$, denoted by $\mu(\Sigma)$, is defined as follows:
$$ \mu(\Sigma) = \max_{x \in S_{n}^{\pm}} \sum_{i \sim j} \sigma(e_{ij}) x_ix_j = \max_{x \in S_{n}^{\pm}} \frac{x^TA(\Sigma)x}{2}. $$ In \cite{wang2021eigenvalues}, Wang et al. extended the Motzkin-Straus theorem for the signed graphs .

\begin{theorem}[{\cite[Theorem 5]{wang2021eigenvalues}}]\label{Signed-MS theorem}
Let $\Sigma = (G, \sigma)$ be a signed graph with balanced clique number $\omega_{b}(\Sigma)$.
    Then $$ \mu(\Sigma) = \frac{1}{2} \bigg(\frac{\omega_b(\Sigma)-1}{\omega_b(\Sigma)}\bigg). $$
\end{theorem}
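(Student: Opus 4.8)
The plan is to establish the two inequalities $\mu(\Sigma)\ge \tfrac12\cdot\tfrac{\omega_b(\Sigma)-1}{\omega_b(\Sigma)}$ and $\mu(\Sigma)\le \tfrac12\cdot\tfrac{\omega_b(\Sigma)-1}{\omega_b(\Sigma)}$ separately. For the lower bound I would take a balanced complete subgraph $K$ on $k=\omega_b(\Sigma)$ vertices. Since $K$ is balanced it is switching equivalent to the all-positive $K_k$, so there is a switching function under which every edge of $K$ becomes positive; as switching leaves both $\mu$ and $\omega_b$ unchanged (via \eqref{eq: switching equi}, together with the substitution $y=D_\eta x$, which satisfies $|y_i|=|x_i|$ and hence preserves $S_n^{\pm}$), I may assume the edges of $K$ are already positive. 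Placing weight $x_i=1/k$ on each vertex of $K$ and $0$ elsewhere gives a vector in $S_n^{\pm}$ for which $\sum_{i\sim j}\sigma(e_{ij})x_ix_j=\binom{k}{2}k^{-2}=\tfrac12\cdot\tfrac{k-1}{k}$, yielding the lower bound.

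For the upper bound, let $x$ maximize $\tfrac12 x^TA(\Sigma)x$ over the compact set $S_n^{\pm}$. Setting $\eta=\sgn(x)$ and switching by $\eta$ turns $x$ into $|x|\ge 0$ while preserving the objective value, $\mu$, and $\omega_b$; so I may assume $x\ge 0$, $\sum_i x_i=1$, and, among all such maximizers, that $x$ has support $S$ of minimum size. The core of the argument is to pin down the structure of $S$. First, there is no negative edge inside $S$: if $\sigma(e_{ij})=-1$ with $i,j\in S$, then along the feasible line $t\mapsto x+t(e_i-e_j)$ the objective is strictly convex (its second-order coefficient equals $-\sigma(e_{ij})=1$), so its maximum over the feasible segment $t\in[-x_i,x_j]$ is attained strictly at an endpoint, contradicting optimality of the interior point $x$. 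Second, $S$ induces a clique: if $i,j\in S$ are non-adjacent then the same perturbation makes the objective affine in $t$, so optimality forces it to be constant, and I can slide to an endpoint to annihilate a coordinate without changing the value, contradicting minimality of $|S|$.

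Consequently $S$ induces a clique all of whose edges are positive, hence a balanced clique; since balance is a switching invariant, this clique is balanced in the original $\Sigma$ as well, so $r:=|S|\le \omega_b(\Sigma)$. On this positive clique one has $\mu(\Sigma)=\sum_{\{i,j\}\subset S}x_ix_j=\tfrac12\big(1-\sum_{i\in S}x_i^2\big)$, and Cauchy--Schwarz gives $\sum_{i\in S}x_i^2\ge 1/r$, whence $\mu(\Sigma)\le \tfrac12\cdot\tfrac{r-1}{r}\le\tfrac12\cdot\tfrac{\omega_b(\Sigma)-1}{\omega_b(\Sigma)}$, the final step because $t\mapsto (t-1)/t$ is increasing. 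Combining the two bounds gives the claimed equality.

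I expect the main obstacle to be obtaining the \emph{balanced} clique number $\omega_b(\Sigma)$ rather than the ordinary clique number $\omega(G)$ in the bound. The naive estimate $\sigma(e_{ij})\le 1$ reduces the signed form to the unsigned one and, via the classical Motzkin--Straus theorem (Theorem~\ref{MS theorem}), only yields $\mu(\Sigma)\le\tfrac12\cdot\tfrac{\omega(G)-1}{\omega(G)}$, which is too weak since $\omega_b(\Sigma)\le\omega(G)$. The decisive idea that repairs this is to first switch by the sign pattern of the optimal vector: this simultaneously makes $x$ non-negative and, through the perturbation analysis, forces the optimal support to carry only positive edges, so the governing clique is automatically balanced. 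Verifying that the perturbations stay compatible with the $\ell^1$ constraint $\sum_i|x_i|=1$ (so that small moves keep $x$ in $S_n^{\pm}$) and separating the strictly convex case from the affine case are the delicate pieces of bookkeeping I would need to handle carefully.
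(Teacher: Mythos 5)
Your proposal is correct, and the lower bound (switch a maximum balanced clique to all-positive, put uniform weight $1/\omega_b(\Sigma)$ on it) is exactly the paper's construction. The upper bound, however, takes a genuinely different route after the shared opening move of switching by $\eta=\sgn(x)$ to replace the optimizer by $|x|\ge 0$. The paper at that point simply discards the negative-edge terms, $2\mu(\Sigma)\le\sum_{\sigma'_{ij}=1}2y_iy_j$, and invokes the classical Motzkin--Straus theorem (Theorem~\ref{MS theorem}) as a black box on the all-positive subgraph $\Sigma'_{+}$, finishing with $\omega(\Sigma'_{+})\le\omega_b(\Sigma)$. You instead keep the full signed quadratic form and rerun the Motzkin--Straus perturbation argument from scratch on a minimal-support nonnegative maximizer: the line $x+t(e_i-e_j)$ stays in $S_n^{\pm}$ for $t\in[-x_i,x_j]$, strict convexity (second-order coefficient $-\sigma'(e_{ij})=+1$) rules out negative edges inside the support, and the affine case rules out non-edges by support reduction; your computation of the quadratic coefficient via $\tfrac12(e_i-e_j)^TA(e_i-e_j)=-A_{ij}$ is right, and the final Cauchy--Schwarz step on the resulting positive (hence balanced) clique is standard. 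What the paper's version buys is brevity, since it reuses Theorem~\ref{MS theorem} directly; what yours buys is self-containedness (you never need the unsigned theorem as input) plus structural information about the optimizer, namely that its support induces a positive clique after switching. Both arguments hinge on the same decisive idea you correctly identify: switching by the sign pattern of the optimal vector so that the controlling clique is automatically balanced.
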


As a consequence of the above theorem, Wang et al. extended Cvetkovi\'{c}'s theorem for the signed graphs.

\begin{theorem}[{\cite[Proposition 5]{wang2021eigenvalues}}]\label{Balance-clique-n}
    Let $\Sigma = (G, \sigma)$ be a signed graph with $n$ vertices, and balanced clique number $\omega_{b}(\Sigma)$ . Let $\lambda_1(\Sigma) $ be the largest eigenvalue of $A(\Sigma)$. Then $$ \lambda_1(\Sigma) \leq n\bigg(1-\frac{1}{\omega_b(\Sigma)}\bigg). $$
\end{theorem}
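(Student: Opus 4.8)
The plan is to combine the signed Motzkin--Straus identity (Theorem \ref{Signed-MS theorem}) with a Cauchy--Schwarz normalization argument, mirroring Cvetkovi\'{c}'s classical proof in the unsigned case. First I would let $v = (v_1, \dots, v_n)$ be an $\ell_2$-unit eigenvector of $A(\Sigma)$ associated with $\lambda_1(\Sigma)$, so that $v^T A(\Sigma) v = \lambda_1(\Sigma)$ and $\sum_i v_i^2 = 1$. The idea is that this Rayleigh quotient is taken over the $\ell_2$-sphere, whereas $\mu(\Sigma)$ optimizes over $S_n^{\pm}$, which is the $\ell_1$-unit sphere; the whole proof is essentially the bookkeeping needed to pass between these two normalizations.

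Next I would rescale $v$ into $S_n^{\pm}$. Setting $\|v\|_1 = \sum_i |v_i|$, which is strictly positive because $v \neq 0$, the vector $w = v / \|v\|_1$ satisfies $\sum_i |w_i| = 1$, hence $w \in S_n^{\pm}$. A direct computation gives $w^T A(\Sigma) w = \lambda_1(\Sigma) / \|v\|_1^2$. Theorem \ref{Signed-MS theorem} asserts $\mu(\Sigma) = \tfrac{1}{2}\big(1 - 1/\omega_b(\Sigma)\big)$, and since $\mu(\Sigma) = \max_{x \in S_n^{\pm}} \tfrac{1}{2} x^T A(\Sigma) x$, we obtain $x^T A(\Sigma) x \leq 1 - 1/\omega_b(\Sigma)$ for every $x \in S_n^{\pm}$. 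Applying this to $w$ yields $\lambda_1(\Sigma) / \|v\|_1^2 = w^T A(\Sigma) w \leq 1 - 1/\omega_b(\Sigma)$, that is, $\lambda_1(\Sigma) \leq \|v\|_1^2 \big(1 - 1/\omega_b(\Sigma)\big)$.

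Finally I would dispose of the factor $\|v\|_1^2$ by Cauchy--Schwarz: $\|v\|_1^2 = \big(\sum_i |v_i|\big)^2 \leq n \sum_i v_i^2 = n$, using $\|v\|_2 = 1$. Because $\omega_b(\Sigma) \geq 1$ we have $1 - 1/\omega_b(\Sigma) \geq 0$, so multiplying the Cauchy--Schwarz bound by this nonnegative factor is sign-safe and gives $\lambda_1(\Sigma) \leq n\big(1 - 1/\omega_b(\Sigma)\big)$, as required. I do not expect a genuine obstacle here: the substantive content is already packaged inside Theorem \ref{Signed-MS theorem}, and the only points meriting care are that $v \neq 0$ guarantees $\|v\|_1 > 0$ (so the division is legitimate) and that the nonnegativity of $1 - 1/\omega_b(\Sigma)$ is what lets the final inequality go through; this nonnegativity also trivially covers the degenerate case $\lambda_1(\Sigma) < 0$, in which the claimed bound is immediate.
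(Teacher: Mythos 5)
Your proof is correct: the rescaling $w = v/\|v\|_1$ lands in $S_n^{\pm}$, Theorem \ref{Signed-MS theorem} gives $w^T A(\Sigma) w \leq 1 - 1/\omega_b(\Sigma)$, and Cauchy--Schwarz bounds $\|v\|_1^2 \leq n$; the sign check on $1 - 1/\omega_b(\Sigma)$ is exactly the right point to flag. However, this is not the route the paper takes. Your argument is essentially the original derivation of Wang et al., who obtained this bound as a direct consequence of their signed Motzkin--Straus theorem via the $\ell_1$/$\ell_2$ normalization trick familiar from Cvetkovi\'{c}'s unsigned proof. The paper instead advertises an \emph{alternate} proof and delivers it at the end of Section \ref{signedwalk-largesteigen}: the bound is the $r=1$ case of Theorem \ref{signed-walks-gen}, since $w_1(G) = n$ and $\epsilon_1(\Sigma) = 0$ (a walk of length $0$ has no negative edges). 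That route does not touch the Motzkin--Straus identity at all; it switches a top eigenvector to have nonnegative entries, passes to the all-positive subgraph $\Sigma'_+$, and invokes Nikiforov's walk inequality there. What your approach buys is brevity and self-containment given Theorem \ref{Signed-MS theorem}; what the paper's approach buys is generality, since the same machinery yields the whole family of bounds $\lambda_1^r(\Sigma) \leq (w_r(G) - \epsilon_r(\Sigma))(1 - 1/\omega_b(\Sigma))$ for all $r$, of which the stated theorem is only the weakest instance.
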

In the same paper, Wang et al. explained the difficulties in extending Nikiforov's theorem (Theorem \ref{nikiforov_2m}) for the signed graphs and mentioned it as an open problem. One of the main objectives of this article is to give a proof of a strengthened version of this open problem for the signed graph (see Theorem \ref{nikiforov-signed-general}). We give an alternate proof for Theorem \ref{Balance-clique-n} too. As a byproduct, we derive a spectral lower bound for the edge bipartiteness of a simple graph and deduce the upper bounds for the least eigenvalues of unsigned graphs obtained by Constantine \cite{gregory1985laa}. The \textit{edge bipartiteness} of an unsigned graph $G$ is the least number of edges whose   deletion yields a bipartite graph \cite{shaun2012laa}. These results are given in Section \ref{sec:frustration-bounds}.

Let $K_n$ denote the complete graph on $n$ vertices. In \cite{bollobas2007cliques}, Bollob\'{a}s and Nikiforov proposed the following conjecture. If $G$ is a $K_{r+1}$-free graph on at least $r+1$ vertices with $m$ edges, and  $\lambda_1(G) \geq \lambda_2(G) \geq \dots \geq \lambda_n(G)$ are the eigenvalues of $A(G)$, then $\lambda_1(G)^2 + \lambda_2(G)^2 \leq 2m\Big( \frac{r-1}{r}\Big).$  Recently Lin et al. \cite{lineigenvalues} confirmed this conjecture for the triangle free graphs.  We show that the signed graph version of this conjecture need not be true for the signed graphs even for the triangle free graphs. We prove the signed analogue of this conjecture for the triangle free graphs whose largest eigenvalue is the spectral radius. This is done in Section \ref{signed-niki}.

Given an unsigned graph $G$, a $k$-walk is a sequence of vertices $v_1,\dots,v_k$ of $G$ such that $v_i$ is adjacent to $v_{i+1}$ for all $i = 1,\dots,k-1$. The length of a walk is the number of edges it has, counting repeated edges as many times as they appear. So the length of a $k$-walk is $k-1$. Let  $w_r(G)$ be the number of $r$-walks in $ G$. Note that, if $G$ is an unsigned graph, then $w_r(G) = e^TA^{r-1}(G)e$, where $e= (1,1,\dots,1)^T$.   Nikiforov proved the following result connecting the number of walks in $G$, chromatic number of $G$ and the spectral radius of the adjacency matrix $A(G)$ of $G$.

\begin{theorem}[{\cite[Theorem 5 $\&$ Theorem 14]{nikiforov2006walks}}]\label{nikiforov-walks}
    Let $G$ be a graph with $n$ vertices and $m$ edges. Let $A(G)$ be its adjacency matrix and $\omega(G)$ be its clique number.  Let $\lambda_1(G) \geq \lambda_2(G) \geq \dots \geq \lambda_n(G)$ be the eigenvalues of the adjacency matrix of $G$.
    Then for $r>0$ and odd $q>0$, $$ \frac{w_{q+r}(G)}{w_q(G)} \leq \lambda^r_1(G) \leq w_r(G)\frac{\omega(G)-1}{\omega(G)}. $$
\end{theorem}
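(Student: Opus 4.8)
The plan is to prove the two inequalities separately, working throughout with the spectral decomposition $A(G)=\sum_{i=1}^{n}\lambda_i u_iu_i^{T}$ (orthonormal $u_i$) and the coordinates $c_i=e^{T}u_i$ of the all-ones vector, so that $w_k=e^{T}A^{k-1}(G)e=\sum_{i=1}^{n}\lambda_i^{\,k-1}c_i^{2}$. Since $A(G)$ is a nonnegative symmetric matrix, Perron--Frobenius gives $\lambda_1=\rho(G)\ge|\lambda_i|$ for every $i$. For the lower bound $w_{q+r}/w_q\le\lambda_1^{r}$ I would write
\[
\lambda_1^{r}w_q-w_{q+r}=\sum_{i=1}^{n}\lambda_i^{\,q-1}\bigl(\lambda_1^{r}-\lambda_i^{r}\bigr)c_i^{2},
\]
and argue termwise: since $q$ is odd, $q-1$ is even and $\lambda_i^{\,q-1}\ge0$; since $\lambda_1=\rho(G)\ge|\lambda_i|$ we have $\lambda_i^{r}\le|\lambda_i|^{r}\le\lambda_1^{r}$, so every summand is nonnegative. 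As $w_q>0$ whenever $G$ has an edge, dividing gives the claim. This direction is routine.

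For the upper bound $\lambda_1^{r}\le\frac{\omega-1}{\omega}w_r$ I would first reduce to $G$ connected: it suffices to prove the bound on the connected component attaining $\lambda_1$, since relative to that component $G$ has the same $\lambda_1$, at least as large a clique number (hence at least as large a value of $\tfrac{\omega-1}{\omega}$), and at least as many $r$-walks. For connected $G$ the Perron vector satisfies $u_1>0$, so $q:=u_1/c_1\in S_n^{+}$, and applying Theorem \ref{MS theorem} to it yields $\tfrac{\lambda_1}{c_1^{2}}=q^{T}A(G)q\le\tfrac{\omega-1}{\omega}$, that is $\lambda_1\le\tfrac{\omega-1}{\omega}c_1^{2}$. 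When $r$ is odd, $r-1$ is even and every term of $w_r=\sum_i\lambda_i^{\,r-1}c_i^{2}$ is nonnegative, whence $w_r\ge\lambda_1^{\,r-1}c_1^{2}\ge\tfrac{\omega}{\omega-1}\lambda_1^{r}$, which is exactly the desired inequality.

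The even $r$ case is where I expect the genuine difficulty, and it is the main obstacle. The natural tool is again Theorem \ref{MS theorem}, now applied to the normalized walk vector $A^{s-1}e/w_s\in S_n^{+}$, which produces the ``squared-walk'' inequality $w_{2s}\le\tfrac{\omega-1}{\omega}w_s^{2}$ for every $s$ (for $s=1$ this is Tur\'an's bound, and in general it is the natural analogue of Theorem \ref{nikiforov_2m}). The trouble is that Motzkin--Straus yields an \emph{upper} bound on even-length walk counts, whereas the target is a \emph{lower} bound on $w_r$; and for even $r$ the simple estimate $w_r\ge\lambda_1^{\,r-1}c_1^{2}$ can fail, because the odd power $\lambda_i^{\,r-1}$ lets the negative eigenvalues contribute with a minus sign (already for $P_3$ the two sides are in fact equal). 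To close this case I would pass to the limit along odd $q$: even powers create no sign cancellation, so $\tfrac{w_{q+r}}{w_q}\to\lambda_1^{r}$ as $q\to\infty$ through odd values even when $G$ is bipartite, while Part~1 keeps the ratios below $\lambda_1^{r}$; combining this with a submultiplicative walk estimate $w_{q+r}\le\tfrac{\omega-1}{\omega}w_qw_r$ would finish. Establishing that submultiplicative estimate for the specific walk vectors (generic bilinear Motzkin--Straus is too weak, losing a factor of two) --- equivalently, controlling the signed contribution of $\lambda_n=-\lambda_1$ in the bipartite case --- is the step I expect to be hardest and the one demanding the most care.
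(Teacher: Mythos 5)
First, a point of comparison: the paper does not prove this statement at all --- Theorem \ref{nikiforov-walks} is imported verbatim from Nikiforov's paper on walks and the spectral radius, so there is no internal proof to measure yours against. Judged on its own terms, your argument for the left inequality is correct and complete: the termwise estimate $\lambda_i^{q-1}\bigl(\lambda_1^r-\lambda_i^r\bigr)c_i^2\ge 0$ for odd $q$ is exactly right. Your argument for the right inequality is also correct for odd $r$: the reduction to the component attaining $\lambda_1$, the bound $\lambda_1\le\frac{\omega-1}{\omega}c_1^2$ obtained from Motzkin--Straus applied to $u_1/c_1$, and the nonnegativity of every term of $w_r=\sum_i\lambda_i^{r-1}c_i^2$ when $r-1$ is even together close that case cleanly.

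The even-$r$ case, however, is a genuine gap, and you have correctly located it but not closed it. Your proposed route --- let $q\to\infty$ through odd values so that $w_{q+r}/w_q\to\lambda_1^r$ (which is fine for a connected graph with an edge, since $c_1>0$ by Perron--Frobenius), then invoke $w_{q+r}\le\frac{\omega-1}{\omega}\,w_q w_r$ --- would indeed finish the proof, but that submultiplicative inequality is left entirely unproven, and it carries essentially all of the remaining difficulty: it is itself one of the theorems of the cited Nikiforov paper rather than a routine consequence of Motzkin--Straus. The obstruction is real, as you half-acknowledge: the bilinear bound $u^TA(G)v\le\frac{\omega-1}{\omega}$ is simply false for general $u,v\in S_{n}^{+}$ (take $G=K_2$, $u=(1,0)$, $v=(0,1)$, giving $u^TA(G)v=1>\tfrac12$), so one must exploit the specific positivity and spectral structure of the walk vectors $A^{q-1}e/w_q$, and no such argument is supplied. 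Attempts to bypass it by multiplying the two adjacent odd cases also fail, since Cauchy--Schwarz gives $w_{2s}^2\le w_{2s-1}w_{2s+1}$, the wrong direction. As it stands, the proposal proves the theorem only for odd $r$.
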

In Section \ref{signedwalk-largesteigen}, we extend the above result for the signed graphs. As a corollary to this result, we obtain Theorem \ref{Balance-clique-n}.

%   ================================================

\section{Preliminaries}

%   In this section, we introduce some of the needed notations and results.
% Nikiforov \cite{nikiforov2020turan} defined the  MS index of a graph  $G$ as follows:
%   $$ \mu(G) = \max_{\mathbf{x} \in S_{n}^{+}} \sum_{i \sim j} x_ix_j = \max_{\mathbf{x} \in S_{n}^{+}} \frac{x^TA(G)x}{2} $$
%   Using this notion, the conclusion of  Theorem \ref{MS theorem} can be stated as $ \mu(G) = \frac{1}{2}\bigg(1 - \frac{1}{\omega(G)}\bigg). $

In this section, we collect the needed matrix theoretic preliminaries about doubly stochastic matrices. For more details on related knowledge, we refer the reader to  \cite{bhatia1997book, zhan2013matrix}. A non-negative square matrix is \textit{doubly stochastic} if the sum of the entries in every row and every column is $1$, and it is \textit{doubly substochastic} if the sum
of the entries in every row and every column is at most $1$.

%A square matrix is called \textit{a weak-permutation matrix} if every row and every column has at most one non-zero entry and all the non-zero entries (if any) are $1$.

%\begin{lemma}[{\cite[ Theorem 3.11]{zhan2013matrix}}]\label{Birkhoff-Neumann-thm}
%   Every doubly substochastic matrix is a convex combination of
%   weak-permutation matrices.
%\end{lemma}
%
%For $\textbf{x} \in \mathbb{R}^n$ and $1\leq p < \infty$,  the \textit{$p$-norm} of $\textbf{x}$ is defined as $\Vert x \Vert_p = \big\{\sum_{i=1}^{n}\vert x_i\vert^p\big\}^\frac{1}{p}.$
%
%\begin{lemma}$(\textbf{Minkowski inequality})$.
%   Let $\mathbf{x},\mathbf{y}\in \mathbb{R}^n_{+}$. If $p>1$, then $\Vert\mathbf{x}+\mathbf{y}\Vert_p \leq \Vert\mathbf{x}\Vert_p + \Vert\mathbf{y}\Vert_p$. Moreover, if both $\mathbf{x}$ and $\mathbf{y}$ are non-zero vectors, equality holds if and only if there exists $\alpha \geq 0$ such that $\mathbf{x} = \alpha \mathbf{y}$.
%\end{lemma}
%
%\begin{lemma}\label{Multiple-minkowski}$(\textbf{multiple Minkowski inequality})$.
%   Let $\{\mathbf{x}^{1}, \dots , \mathbf{x}^{k} \}\subseteq \mathbb{R}^n_{+}$ with $k \geq 2$. If $p>1,$ then $$ \Vert\sum_{i=1}^{k} \mathbf{x}^{i}\Vert_p \leq \sum_{i=1}^{k} \Vert\mathbf{x}^{i}\Vert_p.$$
%   Moreover, if $\mathbf{x}^i \neq 0$ for all $i$, then equality holds if and only if there exists $\alpha_{ij} > 0$ such that $\mathbf{x}^i =\alpha_{ij}\mathbf{x}^j$ for all $i,j$ with $i \neq j$.
%\end{lemma}
For a vector $x \in \mathbb{R}^n$, let $x^\downarrow$ denote the vector obtained by rearranging the coordinates of $x$ in the decreasing order. Given two vectors $x, y \in \mathbb{R}^n $,  we say that  $x$ is \textit{weakly majorised} by $y$, denote by $x \prec_w y$ if
\begin{gather*}
    \sum_{j=1}^{k} {x_j}^\downarrow \leq \sum_{j=1}^{k} {y_j}^\downarrow ~\mbox{ for all}~ 1 \leq k \leq n.
\end{gather*}
If $x \prec_w y \ \text{and}  \ \sum_{i=1}^{n} x_i^\downarrow = \sum_{i=1}^{n} y_i^\downarrow,$
then we say that $x$ is \textit{majorized} by $y$ and denote it by $x \prec y$.

%We also use the definition of ‘a vector is weakly majorized by the other one’ as follows, where we rearrange the components of $\mathbf{x} = (x_1,x_2,\dots,x_n)^T,\ \mathbf{y} = (y_1,y_2,\dots,y_n)^T \in \mathbb{R}^n$ in non-increasing order as $x_1 \geq x_2 \geq \dots \geq x_n$ and $y_1 \geq y_2 \geq \dots \geq y_n$.

%\begin{definition}
%   Let $\mathbf{x} = (x_1,x_2,\dots,x_n)^T,\ \mathbf{y} = (y_1,y_2,\dots,y_n)^T \in \mathbb{R}^n$. If
%   \begin{gather*}
%   \sum_{i=1}^{k} x_i \leq \sum_{i=1}^{k} y_i, \ k=1,2,\dots,n,
%   \end{gather*}
%   then we say that $\mathbf{x}$ is weakly majorized by $\mathbf{y}$ and denote it by $\mathbf{x} \prec_w \mathbf{y}$. If
%   \begin{gather*}
%   \mathbf{x} \prec_w \mathbf{y} \ \text{and}  \ \sum_{i=1}^{n} x_i = \sum_{i=1}^{n} y_i,
%   \end{gather*}
%   then we say that $\mathbf{x}$ is majorized by $\mathbf{y}$ and denote it by $\mathbf{x} \prec \mathbf{y}$.
%\end{definition}

\begin{lemma}[{\cite[Theorem 3.9]{zhan2013matrix}}]\label{Stochastic-majorization}
    Let $x,y \in \mathbb{R}^n_{+}$. Then $x\prec_w y$ if and only if there exists a doubly substochastic matrix $A$ such that $x =Ay$.
\end{lemma}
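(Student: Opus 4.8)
The plan is to prove both implications by reducing to the classical Hardy--Littlewood--P\'olya--Birkhoff theorem, namely that for nonnegative vectors $p, q \in \mathbb{R}^n_+$ one has $p \prec q$ if and only if $p = Dq$ for some doubly stochastic matrix $D$. I will combine this with two elementary facts about nonnegative vectors: (i) if $p \le q$ componentwise then $p \prec_w q$, since taking $S$ to be the set of indices of the $k$ largest coordinates of $p$ gives $\sum_{j=1}^{k} p_j^\downarrow = \sum_{i \in S} p_i \le \sum_{i \in S} q_i \le \sum_{j=1}^{k} q_j^\downarrow$; and (ii) $\prec_w$ is transitive.

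For the direction $(\Leftarrow)$, suppose $x = Ay$ with $A$ doubly substochastic. First I would show that every doubly substochastic $A$ is dominated entrywise by a doubly stochastic matrix $B$, i.e. $A \le B$: the row deficits $r_i = 1 - \sum_j A_{ij}$ and column deficits $c_j = 1 - \sum_i A_{ij}$ are nonnegative with equal total $T = n - \sum_{ij} A_{ij}$, so (when $T>0$) the correction matrix $E$ with $E_{ij} = r_i c_j / T$ has the right margins, and $B = A + E$ is doubly stochastic with $A \le B$. Since $y \ge 0$, monotonicity gives $x = Ay \le By$ componentwise, hence $x \prec_w By$ by fact (i). By Hardy--Littlewood--P\'olya--Birkhoff, $By \prec y$ and in particular $By \prec_w y$, so transitivity yields $x \prec_w y$.

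For the direction $(\Rightarrow)$, suppose $x \prec_w y$. The crux is a reduction from weak majorization to majorization: I claim there is a vector $u \in \mathbb{R}^n_+$ with $x \le u$ componentwise and $u \prec y$. Granting this, the proof finishes cleanly: by Hardy--Littlewood--P\'olya--Birkhoff write $u = Sy$ with $S$ doubly stochastic, and define $A$ by $A_{ij} = (x_i / u_i) S_{ij}$ when $u_i > 0$ and $A_{ij} = 0$ when $u_i = 0$ (legitimate because $u_i = 0$ forces $x_i = 0$). Then each row of $A$ sums to $x_i / u_i \le 1$, each entry satisfies $A_{ij} \le S_{ij}$ so every column sums to at most $1$, and $(Ay)_i = (x_i / u_i)(Sy)_i = (x_i / u_i) u_i = x_i$; thus $A$ is doubly substochastic with $x = Ay$.

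The main obstacle is therefore establishing the claimed vector $u$. After permuting so that $x$ and $y$ are both sorted in decreasing order (a permutation matrix is doubly stochastic, so this affects neither hypothesis, and I can undo the permutation on $u$ at the end), I would construct $u$ by distributing the total slack $\delta = \sum_i y_i - \sum_i x_i \ge 0$ among the coordinates of $x$: raise the entries of $x$ so that the total rises to $\sum_i y_i$ while every partial sum $\sum_{i=1}^{k} u_i$ stays at or below $\sum_{i=1}^{k} y_i$. The weak-majorization hypothesis $\sum_{i=1}^{k} x_i \le \sum_{i=1}^{k} y_i$ guarantees enough room at every level for such a greedy fill to succeed; the one place needing care is verifying that the construction keeps $u$ nonincreasing (a genuine sorted vector) and nonnegative with $u \ge x$.
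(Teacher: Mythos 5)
The paper offers no proof of this lemma --- it is quoted verbatim from Zhan's book --- so your proposal can only be measured against the standard argument, which is essentially what you have reconstructed. Both of your reductions are correct and cleanly executed: completing a doubly substochastic $A$ to a doubly stochastic $B=A+E$ with $E_{ij}=r_ic_j/T$ is exactly right (and the degenerate case $T=0$ is harmless), as is the observation that for nonnegative $y$ this gives $Ay\le By\prec y$ componentwise and hence $x\prec_w y$; in the other direction, rescaling the rows of a doubly stochastic $S$ by $x_i/u_i$ to manufacture the substochastic matrix is the standard device and your verification of the row sums, column sums, and $(Ay)_i=x_i$ is complete.

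The one genuine gap is the step you yourself flag: the existence of $u\in\mathbb{R}^n_+$ with $x\le u$ and $u\prec y$. This is a true and classical fact, but the ``greedy fill'' as you describe it is not yet a proof, and the most natural reading of it fails. Take $x=(1,1)$ and $y=(2,\tfrac12)$, both already sorted; then $x\prec_w y$, but filling from the front so that each partial sum of $u$ is pushed up to that of $y$ forces $u_1=2$ and leaves only $u_2\le\tfrac12<x_2$, so this $u$ does not dominate $x$. The correct greedy raises the \emph{first} coordinate by the minimum slack $\varepsilon=\min_{1\le k\le n}\bigl(\sum_{i\le k}y_i-\sum_{i\le k}x_i\bigr)$ (which preserves the decreasing order and all the constraints $\sum_{i\le k}u_i\le\sum_{i\le k}y_i$), after which either the totals agree and $u\prec y$, or equality holds at some $k<n$ and one splits into the blocks $\{1,\dots,k\}$ and $\{k+1,\dots,n\}$ and recurses on the tail; the block boundary is compatible with the ordering because equality at $k$ and inequality at $k-1$ force $u_k\ge y_k\ge y_{k+1}$. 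With that construction supplied (or with a citation to the corresponding lemma in Marshall--Olkin or Bhatia), your proof is complete and matches the textbook route the paper is implicitly relying on.
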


For $x = (x_1,x_2,\dots,x_n) \in \mathbb{R}^n$ and $1 \leq p < \infty$, define $\Vert x \Vert_p = \Big\{\sum_{i=1}^{n} \vert x_i \vert^p \Big\}^\frac{1}{p}.$
\begin{theorem}[{\cite[Theorem 2.1]{lineigenvalues}}]\label{rev_majorization_thm}
    Let $x = (x_1,x_2,\dots,x_n),\ y = (y_1,y_2,\dots,y_n) \in \mathbb{R}^n_{+}$, such that $x_i$ and $y_i$ are in non-increasing order. If $y \prec_w x$, then $\Vert y\Vert_p \leq \Vert x\Vert_p$ for any real number $p> 1$, where equality holds if and only if $x=y$.
\end{theorem}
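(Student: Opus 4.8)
The plan is to reduce the norm inequality to a statement about sums of a convex function and then exploit the ordering hypothesis through summation by parts. Since $p>1$, the function $\phi(t)=t^p$ is increasing and strictly convex on $[0,\infty)$, with $\phi'(t)=p\,t^{p-1}\ge 0$ continuous and non-decreasing (note $\phi'(0)=0$, so no boundary irregularity arises). Because $\Vert y\Vert_p\le\Vert x\Vert_p$ is equivalent to $\sum_{i=1}^n\phi(y_i)\le\sum_{i=1}^n\phi(x_i)$, it suffices to prove that weak majorization is preserved under the increasing convex map $\phi$. This is the classical Hardy--Littlewood--P\'olya principle, and I would give a self-contained argument tailored to $\phi$ so that the equality case falls out cleanly.

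First I would record the partial-sum form of the hypothesis. As $x$ and $y$ are already arranged in non-increasing order, the weak majorization $y\prec_w x$ reads $S_k:=\sum_{i=1}^k(x_i-y_i)\ge 0$ for every $1\le k\le n$. Next, the tangent-line (subgradient) inequality for the convex function $\phi$ gives, for each index $i$, the bound $\phi(x_i)-\phi(y_i)\ge\phi'(y_i)(x_i-y_i)$, so that, writing $c_i=\phi'(y_i)$,
\[
\sum_{i=1}^n\phi(x_i)-\sum_{i=1}^n\phi(y_i)\ \ge\ \sum_{i=1}^n c_i\,(x_i-y_i).
\]
The key observation is that $c_i=\phi'(y_i)$ is non-increasing in $i$ (since $y$ is non-increasing and $\phi'$ is non-decreasing) and non-negative. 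Applying Abel summation,
\[
\sum_{i=1}^n c_i\,(x_i-y_i)\ =\ \sum_{i=1}^{n-1}(c_i-c_{i+1})\,S_i\ +\ c_n\,S_n,
\]
and every summand is a product of non-negative factors, namely $c_i-c_{i+1}\ge 0$, $c_n\ge 0$, and $S_i\ge 0$. Hence the right-hand side is non-negative, which yields $\Vert x\Vert_p^p\ge\Vert y\Vert_p^p$.

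The equality analysis is the part that needs the most care, and here the strict convexity of $t^p$ for $p>1$ does the work. If $x\ne y$, then $x_j\ne y_j$ for some $j$, and strict convexity makes the tangent-line inequality strict at that index, $\phi(x_j)-\phi(y_j)>\phi'(y_j)(x_j-y_j)$, while the Abel sum above remains non-negative; chaining the two bounds gives $\Vert x\Vert_p^p>\Vert y\Vert_p^p$. Conversely $x=y$ trivially gives equality, so equality holds precisely when $x=y$.

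The step I expect to require the most care is the coordination of the two monotonicity facts in the Abel summation: the argument works only because both vectors are given in non-increasing order, which simultaneously makes the coefficients $c_i=\phi'(y_i)$ non-increasing and makes the partial sums $S_k$ coincide with the weak-majorization partial sums $\sum_{i\le k}x_i^\downarrow-\sum_{i\le k}y_i^\downarrow$. If either vector were unsorted, neither the sign of $c_i-c_{i+1}$ nor the sign of $S_k$ would be guaranteed, so the sorting hypothesis is essential rather than cosmetic. As an alternative route, one could instead appeal to Lemma \ref{Stochastic-majorization}: write $y=Ax$ for a doubly substochastic matrix $A$, apply Jensen's inequality row by row (augmenting each row with a zero coordinate to absorb the row-sum deficiency) to obtain $y_i^p\le\sum_j a_{ij}x_j^p$, and then sum over $i$ using the column-sum bound. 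I expect the convexity-plus-Abel argument to be the more transparent one, especially for pinning down the equality case.
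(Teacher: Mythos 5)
Your argument is correct and complete. Note, however, that the paper does not prove this statement at all: it is imported verbatim as Theorem 2.1 of Lin et al.\ \cite{lineigenvalues}, so there is no in-paper proof to compare against. What you have written is a clean, self-contained version of the classical Tom\'i\'c--Weyl/Hardy--Littlewood--P\'olya argument: the first-order convexity bound $\phi(x_i)\ge\phi(y_i)+\phi'(y_i)(x_i-y_i)$ for $\phi(t)=t^p$, followed by Abel summation against the non-negative partial sums $S_k$, with the coefficients $c_i=\phi'(y_i)$ non-increasing precisely because $y$ is sorted. The Abel identity $\sum_i c_i(x_i-y_i)=\sum_{i=1}^{n-1}(c_i-c_{i+1})S_i+c_nS_n$ checks out, and your equality analysis is sound: strict convexity of $t^p$ on $[0,\infty)$ (including the endpoint cases involving $0$, since $\phi'(0)=0$ for $p>1$) makes the tangent-line bound strict at any index where $x_j\ne y_j$, while the Abel sum stays non-negative, so $x\ne y$ forces $\Vert x\Vert_p^p>\Vert y\Vert_p^p$. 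Your remarked alternative route via Lemma \ref{Stochastic-majorization} (writing $y=Ax$ for a doubly substochastic $A$ and applying Jensen row by row) is closer in spirit to how the cited source and the paper's own preliminaries are set up, but it makes the equality case noticeably more awkward to extract; the convexity-plus-Abel route you chose handles it more transparently and uses nothing beyond the sorting hypothesis, which you correctly identify as essential rather than cosmetic.
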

\begin{theorem}[The Interlacing Theorem]\label{interlacing-signed}
    Suppose $A \in \mathbb{R}^{n\times n}$ is symmetric and $B \in \mathbb{R}^{m\times m}$ be a principal submatrix of $A$. Suppose $A$ has eigenvalues $\lambda_1 \leq \lambda_2 \leq \dots \leq\lambda_n$
    and $B$ has eigenvalues $\beta_1 \leq \beta_2 \leq \dots \leq \beta_m$. Then
    $$\lambda_k \leq \beta_k \leq \lambda_{n-m+k} \quad \text{for} \ k = 1,\dots,m.$$
\end{theorem}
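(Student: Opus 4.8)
The plan is to derive both inequalities from the Courant--Fischer min-max characterization of eigenvalues, since this is precisely the classical Cauchy interlacing theorem. First I would record the two forms of Courant--Fischer for a symmetric matrix $M$ with eigenvalues $\mu_1 \le \cdots \le \mu_n$ arranged increasingly: writing $R_M(x) = \frac{x^T M x}{x^T x}$ for the Rayleigh quotient, one has $\mu_k = \min_{\dim S = k}\, \max_{0 \ne x \in S} R_M(x) = \max_{\dim S = n-k+1}\, \min_{0 \ne x \in S} R_M(x)$, where $S$ ranges over subspaces of the indicated dimension. Both forms will be needed, one for each inequality.

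Next I would set up the embedding. Since permuting the index set conjugates $A$ by a permutation matrix and leaves its spectrum unchanged, I may assume that $B$ is the leading $m \times m$ principal block of $A$. Let $P \colon \mathbb{R}^m \to \mathbb{R}^n$ be the isometric embedding that appends $n-m$ zeros; then $R_A(Py) = R_B(y)$ for every nonzero $y \in \mathbb{R}^m$, and $P$ carries a $d$-dimensional subspace of $\mathbb{R}^m$ to a $d$-dimensional subspace of $\mathbb{R}^n$. This compatibility of Rayleigh quotients and dimensions is the whole engine of the argument.

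For the lower bound I would fix $k$ and apply the min-max form to $B$: for every $k$-dimensional $S \subseteq \mathbb{R}^m$, the image $PS$ is a $k$-dimensional subspace of $\mathbb{R}^n$, so $\max_{0 \ne y \in S} R_B(y) = \max_{0 \ne z \in PS} R_A(z) \ge \lambda_k$, the last step being the minimum of $\max R_A$ over all $k$-dimensional subspaces of $\mathbb{R}^n$. Taking the minimum over $S$ yields $\beta_k \ge \lambda_k$. For the upper bound I would instead use the max-min form applied to subspaces of dimension $m-k+1$: for every such $S \subseteq \mathbb{R}^m$ one has $\min_{0 \ne y \in S} R_B(y) = \min_{0 \ne z \in PS} R_A(z) \le \lambda_{n-m+k}$, where the bound comes from the max-min characterization of $\lambda_{n-m+k}$ over $(m-k+1)$-dimensional subspaces of $\mathbb{R}^n$, using the identity $n-(n-m+k)+1 = m-k+1$. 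Taking the maximum over $S$ gives $\beta_k \le \lambda_{n-m+k}$.

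The only point requiring genuine care — and the step I would double-check most carefully — is the dimension bookkeeping: matching the index $k$ in the min-max form for the lower bound, and matching codimensions so that the subspace dimension $m-k+1$ relevant to $B$ coincides with the dimension $n-(n-m+k)+1$ appearing in the max-min form for $A$. An alternative I would keep in reserve is to prove only the one-step case $m=n-1$, where interlacing reads $\lambda_k \le \beta_k \le \lambda_{k+1}$, and then iterate $n-m$ times; this reduces all the bookkeeping to a single clean induction, at the cost of setting up the inductive step.
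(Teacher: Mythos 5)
Your argument is correct: both applications of Courant--Fischer are set up properly, the isometric embedding $P$ preserves Rayleigh quotients and subspace dimensions as you claim, and the dimension count $n-(n-m+k)+1=m-k+1$ that you flag as the delicate point does check out. Note, however, that the paper states this result (Cauchy's interlacing theorem) as a classical preliminary without any proof, so there is no in-paper argument to compare against; your proof is the standard one and would serve if a proof were to be included.
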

\section{Eigenvalues and Balanced clique number} \label{sec:frustration-bounds}

In this section, we first prove an open problem posed by Wang et al. \cite{wang2021eigenvalues}, and as a corollary, we obtain some of the known classical bounds for the least eigenvalues of the adjacency matrices of unsigned graphs. Then we prove Tur\'{a}n's theorem for signed graphs. Thereupon we prove a signed version one of Nikiforov's bound for the largest eigenvalue of unsigned graph. As a consequence of this we give an different proof for a result of Stani\'{c}. On a final note we derive an alternate proof of Motzkin-Strauss theorem for signed graphs.

For a signed graph $\Sigma$, the frustration index, denoted by  $\epsilon(\Sigma)$, is the minimum number of edges to be deleted such that the resultant signed graph is balanced \cite{belardo-laa}.

Let $\Sigma = (G , \sigma)$ be a signed graph. An edge $e$ of $\Sigma$ is \textit{positive} (resp. \textit{negative}) if $\sigma(e) =1$ (resp. $\sigma(e)=-1$). For a signed graph $\Sigma$, let $m^+(\Sigma)$  denote the number of positive edges in $\Sigma$, and $m^-(\Sigma)$ denote the number of negative edges in $\Sigma$.
\begin{lemma}\label{frustation-max-positive}
    Let $\Sigma$ be a signed graph with $n$ vertices, $m$ edges and frustration index $\epsilon(\Sigma)$. If  $\Sigma$ and $\Sigma'$ are switching equivalent, then
    $ m^{+}(\Sigma') \leq m-\epsilon(\Sigma).$
\end{lemma}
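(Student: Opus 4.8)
The plan is to recast the desired inequality in terms of negative edges and then invoke the switching-invariance of the frustration index. Since the underlying graph is fixed under switching, we have $m^{+}(\Sigma') + m^{-}(\Sigma') = m$, so the claimed bound $m^{+}(\Sigma') \le m - \epsilon(\Sigma)$ is equivalent to $m^{-}(\Sigma') \ge \epsilon(\Sigma)$. Thus it suffices to show that every signed graph switching equivalent to $\Sigma$ carries at least $\epsilon(\Sigma)$ negative edges.

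First I would establish that switching equivalence preserves the frustration index, namely $\epsilon(\Sigma) = \epsilon(\Sigma')$. The key observation is that switching commutes with edge deletion: if $\Sigma'$ is obtained from $\Sigma$ by the switching function $\eta$ (equivalently by the conjugation $A(\Sigma') = D_{\eta}^{-1} A(\Sigma) D_{\eta}$ of \eqref{eq: switching equi}), then for any edge subset $F$ the restricted graphs $\Sigma - F$ and $\Sigma' - F$ are again switching equivalent via the same $\eta$. Since switching equivalence preserves balance, as recorded in the preliminaries, $\Sigma - F$ is balanced if and only if $\Sigma' - F$ is balanced. Hence the minimum size of an edge set whose deletion yields a balanced graph coincides for the two, giving $\epsilon(\Sigma) = \epsilon(\Sigma')$.

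Next I would exhibit an explicit balancing set for $\Sigma'$ of size $m^{-}(\Sigma')$, obtained by deleting all of its negative edges. The resulting signed graph has only positive edges, so every cycle has sign $+1$ and the graph is balanced. Therefore $\epsilon(\Sigma') \le m^{-}(\Sigma')$. Combining this with the previous step gives
\[
\epsilon(\Sigma) = \epsilon(\Sigma') \le m^{-}(\Sigma') = m - m^{+}(\Sigma'),
\]
and rearranging produces the desired inequality $m^{+}(\Sigma') \le m - \epsilon(\Sigma)$.

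The argument is short, and the only point requiring genuine care is the switching-invariance of $\epsilon$. I expect the main obstacle to be making precise that edge deletion and switching commute, so that balance of every deleted subgraph is simultaneously preserved; once that is in hand, the choice of the all-negative edge set as a balancing set and the final rearrangement are immediate.
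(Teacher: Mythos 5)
Your proof is correct and follows essentially the same route as the paper: delete all negative edges of $\Sigma'$ to obtain a balanced (all-positive) subgraph, conclude $\epsilon(\Sigma') \le m^{-}(\Sigma')$, and finish using $\epsilon(\Sigma) = \epsilon(\Sigma')$. The only difference is that you spell out why the frustration index is switching-invariant (switching commutes with edge deletion and preserves balance), a point the paper uses without justification.
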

\begin{proof}
    Let $H$ be the signed subgraph of $\Sigma'$ consisting of all the positive edges of $\Sigma'$. Then $H$ is balanced, and hence $\epsilon(\Sigma')  \leq m^-(\Sigma')$. Thus $m^+(\Sigma') \leq m-\epsilon(\Sigma),$ as $\epsilon(\Sigma) = \epsilon(\Sigma')$.
\end{proof}

Essentially  Lemma \ref{frustation-max-positive} says that the  number of positive edges in any signed graph  in the whole switching equivalence class of $\Sigma = (G, \sigma)$ is bounded above by $m-\epsilon(\Sigma)$.

Next, we prove a lemma which is crucial in the proofs of the main theorems.
\begin{lemma}
    Let $\Sigma = (G, \sigma)$ be a signed graph with the balanced clique number $\omega_{b}(\Sigma)$.  If  $\Sigma$ and $\Sigma'$ are switching equivalent, then $\omega_{b}(\Sigma) = \omega_{b}(\Sigma'). $
\end{lemma}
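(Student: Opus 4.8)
The plan is to exploit the fact, already noted in the excerpt, that switching equivalence preserves balance, together with the observation that switching equivalence passes to induced subgraphs. Since $\Sigma = (G,\sigma)$ and $\Sigma' = (G,\sigma')$ share the same underlying unsigned graph $G$, the complete subgraphs (cliques) of $\Sigma$ and of $\Sigma'$ are indexed by exactly the same vertex subsets $S \subseteq V$, namely those $S$ for which $G[S]$ is complete. Thus the only thing that can differ between $\omega_b(\Sigma)$ and $\omega_b(\Sigma')$ is \emph{which} of these cliques are balanced. The strategy is to show that a clique on a vertex set $S$ is balanced in $\Sigma$ if and only if it is balanced in $\Sigma'$.

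First I would fix a switching function $\eta \colon V \to \{1,-1\}$ (equivalently the diagonal matrix $D_\eta$ of \eqref{eq: switching equi}) realizing the switching equivalence $\Sigma \sim \Sigma'$. Given any vertex subset $S$ inducing a clique, I would consider the induced signed subgraphs $\Sigma[S]$ and $\Sigma'[S]$. The key step is that the restriction $\eta|_S$ is itself a switching function on $G[S]$ satisfying $\sigma'(e_{ij}) = \eta(v_i)^{-1}\sigma(e_{ij})\eta(v_j)$ for all edges $e_{ij}$ with $v_i, v_j \in S$; equivalently, $A(\Sigma'[S]) = D^{-1} A(\Sigma[S]) D$, where $D$ is the principal submatrix of $D_\eta$ on the rows and columns indexed by $S$. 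Hence $\Sigma[S] \sim \Sigma'[S]$.

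Next, invoking that switching equivalence preserves balance, I would conclude that $\Sigma[S]$ is balanced precisely when $\Sigma'[S]$ is balanced. Therefore the family of balanced cliques of $\Sigma$ and the family of balanced cliques of $\Sigma'$ coincide as collections of vertex sets, and in particular their largest members have the same cardinality, giving $\omega_b(\Sigma) = \omega_b(\Sigma')$.

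There is no serious obstacle here; the lemma is essentially a bookkeeping consequence of two facts (switching preserves balance, and switching restricts to induced subgraphs). The only point requiring any care is the verification that switching equivalence is inherited by induced subgraphs, which I would make explicit via the restricted diagonal conjugation above to avoid any hidden assumption that the whole switching function must be used.
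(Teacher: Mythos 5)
Your proof is correct and follows essentially the same route as the paper, whose entire argument is the one-line observation that switching preserves the balance of cycles and hence of cliques; you simply make explicit the (easy) fact that switching restricts to induced subgraphs. Nothing further is needed.
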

\begin{proof}
    Since switching in signed graphs preserves the balance of cycles, the balance of cliques must also be preserved.
\end{proof}

%The follwoing theorem was conjectured by Edwards and Elphick in 1983 \cite{edwars-elphick-1983},and proved by Nikiforov  \cite{nikiforov2002some} in 2002.
%\begin{theorem}[{\cite[Theorem 2.1]{nikiforov2002some}}]
%   Let $G$  be a graph with $n$ vertices and $m$ edges. Let $\mathbf{A}(G)$ be its adjacency matrix and $\omega(G)$ be its clique number. If  $\lambda_1(G)$ is the largest eigenvalue of the adjacency matrix of $G$,
%   then $$ \lambda^2_1(G) \leq 2m\frac{\omega(G)-1}{\omega(G)}. $$
%\end{theorem}

In \cite{wang2021eigenvalues}, Wang et al. proved the Motzkin-Strauss theorem and Wilf's theorem for signed graphs. In the same paper extending Theorem \ref{nikiforov_2m} for the signed graphs is mentioned as an open problem. Next, we prove a stronger version of this open problem.
\begin{theorem}\label{nikiforov-signed-general}
    Let $\Sigma = (G, \sigma)$ be a signed graph with  $m$ edges. If $\lambda_1(\Sigma)$ is the largest eigenvalue of the adjacency matrix $A(\Sigma)$,  then $$ \lambda_1^2(\Sigma) \leq 2\big(m-\epsilon(\Sigma)\big)\bigg(1- \frac{1}{\omega_{b}(\Sigma)}\bigg), $$
    where  $\omega_{b}(\Sigma)$ and  $\epsilon(\Sigma)$ are the balanced clique number  and frustration index of $\Sigma$, respectively.
\end{theorem}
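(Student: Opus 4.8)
The plan is to prove the inequality by passing to a cleverly chosen switching representative, discarding the negative edges, and then applying the unsigned Motzkin--Straus theorem (Theorem \ref{MS theorem}). The three facts I would lean on are: switching by a signature matrix $D_\eta$ is a similarity transformation \eqref{eq: switching equi} and hence preserves the spectrum; the balanced clique number is a switching invariant (the lemma immediately preceding this theorem); and Lemma \ref{frustation-max-positive}, which caps the number of positive edges of any representative by $m-\epsilon(\Sigma)$. I would also record at the outset that $\lambda_1(\Sigma)\ge 0$, since $\Tr A(\Sigma)=0$ forces the largest eigenvalue to be at least the average value $0$; this lets me square inequalities freely later on.

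First I would fix a unit eigenvector $x$ of $A(\Sigma)$ for $\lambda_1$ and switch by $D=\diag(\sgn(x_i))$. Writing $\Sigma'=(G,\sigma')$ for the resulting switching-equivalent graph, the vector $|x|=Dx\ge 0$ is a unit eigenvector of $A(\Sigma')$ for the same eigenvalue $\lambda_1$, while $\omega_b(\Sigma')=\omega_b(\Sigma)$. Splitting the Rayleigh quotient over the sets $E^+,E^-$ of positive and negative edges of $\Sigma'$ gives
\[
\lambda_1=|x|^{T}A(\Sigma')|x|=2\sum_{ij\in E^+}|x_i||x_j|-2\sum_{ij\in E^-}|x_i||x_j|\le 2\sum_{ij\in E^+}|x_i||x_j|,
\]
where the final step merely drops the negative-edge sum, which is nonnegative because $|x|\ge 0$. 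This discarding move is the crux of the argument: it is exactly what replaces the total edge count $m$ by the positive-edge count $m^+(\Sigma')$, and it is the reason one must switch to a nonnegative eigenvector rather than to the maximum-positive-edge representative.

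Next I would apply Cauchy--Schwarz over $E^+$ to factor the estimate as $\lambda_1\le \sqrt{2\,m^+(\Sigma')}\,\sqrt{z^{T}A(G^+)z}$, where $G^+=(V,E^+)$ is the unsigned graph of positive edges and $z=(x_1^2,\dots,x_n^2)\in S_n^{+}$ because $\|x\|_2=1$. Squaring and invoking Theorem \ref{MS theorem} for $G^+$ yields $\lambda_1^2\le 2\,m^+(\Sigma')\big(1-\tfrac{1}{\omega(G^+)}\big)$. Two comparisons then finish the proof: every clique of $G^+$ induces an all-positive, hence balanced, complete subgraph of $\Sigma'$, so $\omega(G^+)\le\omega_b(\Sigma')=\omega_b(\Sigma)$ and therefore $1-\tfrac{1}{\omega(G^+)}\le 1-\tfrac{1}{\omega_b(\Sigma)}$; and Lemma \ref{frustation-max-positive} gives $m^+(\Sigma')\le m-\epsilon(\Sigma)$. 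Combining these bounds delivers the claimed inequality. I expect the main obstacle to lie not in any single computation but in orchestrating the bookkeeping so that both comparisons point the right way: recognizing that the nonnegative-eigenvector switching is what legitimizes throwing away the negative edges, that the leftover quantity is genuinely a Motzkin--Straus functional on $G^+$, and that the clique inequality $\omega(G^+)\le\omega_b$ (rather than the reverse) is what keeps the $\omega_b$ in the denominator.
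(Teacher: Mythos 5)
Your proposal is correct and follows essentially the same route as the paper's proof: switch by the sign pattern of a unit $\lambda_1$-eigenvector to make the eigenvector nonnegative, drop the negative-edge contribution to the Rayleigh quotient, apply Cauchy--Schwarz over the positive edges, invoke Motzkin--Straus on the all-positive subgraph with the vector of squared coordinates, and finish with $m^+(\Sigma')\le m-\epsilon(\Sigma)$ and $\omega(G^+)\le\omega_b(\Sigma)$. The only cosmetic difference is that the paper explicitly sets the switching value to $1$ on coordinates where $x_i=0$ so that $D_\eta$ is invertible, a detail you should include.
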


\begin{proof}
    Let $x=(x_1,\dots, x_n) $ be an eigenvector corresponding to the eigenvalue $\lambda_1(\Sigma)$ with $\Vert x \Vert_2 = 1$.  Define $y = (\vert x_1 \vert, \dots, \vert x_n \vert)$. Consider the switching function $\eta : V(G) \rightarrow \{ 1, -1\}$ defined as follows:  \begin{gather*}
        \eta (v_i) = \begin{cases}
            \sgn{x_i}\quad \text{if} \quad x_i \neq 0, \\
            1   \quad \text{otherwise}.
        \end{cases}
    \end{gather*}Now, switch the signed graph $\Sigma = (G, \sigma)$ to $\Sigma' = (G, \sigma')$  using the switching function $\eta$ as defined above.  Then $A(\Sigma') = D_{\eta}^{-1}A(\Sigma)D_{\eta}$. Note that  $\lambda_1(\Sigma) = \lambda_1(\Sigma')$ and $\omega_{b}(\Sigma)=\omega_{b}(\Sigma')$. Now,
    \begin{align*}
        \lambda_1(\Sigma) &= x^TA(\Sigma)x  \\
        &= y^TD_{\eta}^{-1}A(\Sigma)D_{\eta}y\\
        &= y^TA(\Sigma')y  \\
        &= 2\sum_{i \sim j}\sigma'_{ij}y_iy_j  ~~~~~~~~~~[\mbox{$\sigma'_{ij}$ is the $(i, j)^{th}$ entry of $A(\Sigma')$} ]\\
        &= \sum_{i \sim j,\ \sigma'_{ij}=1}2y_iy_j - \sum_{i \sim j,\ \sigma'_{ij}=-1}2y_iy_j.
    \end{align*}
    As all the entries of the vector $y$ are non-negative, we have
    \begin{gather}\label{stanic-lemma}
        \lambda_1(\Sigma) \leq \sum_{i \sim j,\ \sigma'_{ij}=1}2y_iy_j,
    \end{gather}
    and hence
    $$\lambda_1(\Sigma)^2 \leq \bigg(\sum_{i \sim j,\ \sigma'_{ij}=1}2y_iy_j\bigg)^2. $$
    Let  $\Sigma'_{+}$ be the signed subgraph of $\Sigma'$ consisting of all the positive edges of $\Sigma'$. Let $m^+$ be the number of edges in  $\Sigma'_{+}$,  and $\omega(\Sigma'_{+})$ be the  clique number of $\Sigma'_{+}$. By the Cauchy-Schwartz inequality, we have
    \begin{gather*}
        \bigg(\sum_{i \sim j,\ \sigma'_{ij}=1}2y_iy_j\bigg)^2 \leq 2m^{+}\sum_{i \sim j,\ \sigma'_{ij}=1}2y_i^2y_j^2.
    \end{gather*}
    Since all the  edges of the signed subgraph $\Sigma'_{+}$ are positive and the vector $y$ is a unit eigenvector (that is, $\Vert y \Vert_2=1$), by Theorem \ref{MS theorem},  we have
    \begin{gather*}
        \sum_{i \sim j,\ \sigma'_{ij}=1}2y_i^2y_j^2 \leq \bigg(1 - \frac{1}{\omega(\Sigma'_{+})}\bigg).
    \end{gather*}
    Thus
    $$\lambda_1(\Sigma)^2 \leq 2m^{+}\bigg(1 - \frac{1}{\omega(\Sigma'_{+})}\bigg).$$
    Since $\Sigma'_{+}$ is the subgraph of $\Sigma'$ containing all positive edges, by Lemma \ref{frustation-max-positive}, we have $m^+ \leq m-\epsilon(\Sigma)$ and $\omega(\Sigma'_{+}) \leq \omega_{b}(\Sigma)$. Thus
    $$ \lambda_1^2(\Sigma) \leq 2\big(m-\epsilon(\Sigma)\big)\bigg(1- \frac{1}{\omega_{b}(\Sigma)}\bigg). \qedhere $$
\end{proof}

Next, we prove some of the known classical bounds on the least eigenvalues of the adjacency matrices of unsigned graphs using Theorem \ref{nikiforov-signed-general}. The \textit{edge bipartiteness} of an unsigned graph $G$ is the least number of edges whose  deletion yields a bipartite graph \cite{shaun2012laa}. The following is a result of Favaron et al. \cite{favarondisc1993}, and the result was stated in a different form. We rewrite the bound in terms of edge bipartiteness.

\begin{corollary}[{\cite[Theorem 2.17]{favarondisc1993}}]\label{lambda-n-bounds}
    Let $G$ be a graph with $n$ vertices and $m$ edges. Let $A(G)$ be its  adjacency matrix and $\omega(G)$ be its clique number. Let $\epsilon_b(G)$ denote the edge bipartiteness of the graph $G$. Let $\lambda_n(G)$ be the least eigenvalue of $A(G)$. Then
    $$\lambda_n^2(G) \leq m - \epsilon_b(G).$$
\end{corollary}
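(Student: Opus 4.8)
The plan is to apply Theorem \ref{nikiforov-signed-general} to the all-negative signed graph $\Sigma = (G, -1)$, for which $A(\Sigma) = -A(G)$. Since negating a symmetric matrix reverses its spectrum, the largest eigenvalue of $\Sigma$ satisfies $\lambda_1(\Sigma) = -\lambda_n(G)$, and hence $\lambda_1^2(\Sigma) = \lambda_n^2(G)$. Thus it suffices to evaluate the two structural quantities $\epsilon(\Sigma)$ and $\omega_b(\Sigma)$ that appear on the right-hand side of Theorem \ref{nikiforov-signed-general} for this particular signing, and then substitute.

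First I would compute the frustration index. The sign of a cycle $C$ of length $l$ in $(G, -1)$ is $(-1)^l$, so $C$ is neutral exactly when $l$ is even. Consequently $(G, -1)$ is balanced if and only if $G$ has no odd cycle, i.e.\ if and only if $G$ is bipartite. More generally, deleting a set of edges makes $(G, -1)$ balanced precisely when the remaining underlying graph is bipartite; hence the minimum number of such deletions is exactly the edge bipartiteness, giving $\epsilon(\Sigma) = \epsilon_b(G)$.

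Next I would determine the balanced clique number. Any triangle in $(G, -1)$ has sign $(-1)^3 = -1$ and is therefore a non-neutral cycle, so no clique on three or more vertices can be balanced; on the other hand a single edge (or vertex) contains no cycle and is trivially balanced. Assuming $G$ has at least one edge (the edgeless case is immediate, since then $m = \epsilon_b(G) = \lambda_n(G) = 0$), this yields $\omega_b(\Sigma) = 2$.

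Substituting $\omega_b(\Sigma) = 2$ and $\epsilon(\Sigma) = \epsilon_b(G)$ into Theorem \ref{nikiforov-signed-general} gives
$$\lambda_n^2(G) = \lambda_1^2(\Sigma) \leq 2\big(m - \epsilon_b(G)\big)\Big(1 - \tfrac{1}{2}\Big) = m - \epsilon_b(G),$$
which is the claimed bound. The only genuine content is the identification of $\epsilon(\Sigma)$ with $\epsilon_b(G)$, which rests on the classical equivalence between balance of an all-negative signing and bipartiteness of the underlying graph; once $\omega_b(\Sigma) = 2$ is pinned down, the factor $1 - \tfrac{1}{2}$ conveniently cancels the leading $2$. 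I therefore expect no serious obstacle beyond correctly reading off these two invariants of $(G,-1)$.
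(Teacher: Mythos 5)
Your proposal is correct and follows exactly the paper's own argument: apply Theorem \ref{nikiforov-signed-general} to the all-negative signing $(G,-1)$, identify $\lambda_1^2(\Sigma)=\lambda_n^2(G)$, $\omega_b(\Sigma)=2$ (no triangle is balanced), and $\epsilon(\Sigma)=\epsilon_b(G)$ (balance of $(G,-1)$ is bipartiteness of $G$), then substitute. You in fact supply slightly more detail than the paper, e.g.\ the parity argument for cycle signs and the trivial edgeless case.
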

\begin{proof}
    Let $\Sigma = (G,-1)$ be the signed graph with all negative edges on $G$. Then, by  Theorem \ref{nikiforov-signed-general}, we get
    $$\lambda_{1}^2(\Sigma) = \lambda_n^2(G) \leq 2(m - \epsilon(\Sigma))\bigg(1 - \frac{1}{\omega_{b}(\Sigma)}\bigg).$$
    In the signed graph $\Sigma = (G,-1)$,  triangles are not balanced, so $\omega_b(\Sigma) =2$. Note that $\Sigma$ is balanced if and only if $G$ is bipartite, thus $\epsilon(\Sigma) = \epsilon_b(G)$. Substituting these in the expression above, gives the desired result.
\end{proof}

From Corollary \ref{lambda-n-bounds}, we can also prove two other classical bounds for $\lambda_{n}(G)$ for a graph $G$.
\begin{corollary}[{\cite[Proposition 2]{gregory1985laa}}]
    Let $G$ be a graph with $n$ vertices and $m$ edges. Let $A(G)$ be its  adjacency matrix and $\omega(G)$ be its clique number. Let $ \lambda_n(G)$ be the least eigenvalue of $A(G)$.
    \begin{enumerate}
        \item If $n = 2k$, then $\vert\lambda_{n}(G)\vert \leq k.$
        \item If $n = 2k+1$, then $\vert\lambda_{n}(G)\vert \leq \sqrt{k(k+1)}.$
    \end{enumerate}
\end{corollary}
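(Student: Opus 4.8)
The plan is to feed Corollary \ref{lambda-n-bounds} into a purely combinatorial estimate on $m - \epsilon_b(G)$. That corollary already gives $\lambda_n^2(G) \leq m - \epsilon_b(G)$, so it suffices to bound the right-hand side by $\lfloor n^2/4 \rfloor$, which equals $k^2$ when $n = 2k$ and $k(k+1)$ when $n = 2k+1$. Taking square roots then yields the two claimed inequalities on $\vert \lambda_n(G)\vert$, splitting into the even and odd cases exactly as in the statement.

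The key observation is that $m - \epsilon_b(G)$ is precisely the maximum number of edges carried by a spanning bipartite subgraph of $G$. Indeed, by definition $\epsilon_b(G)$ is the least number of edges whose deletion renders $G$ bipartite, so deleting an optimal such set leaves a bipartite graph on the same vertex set with $m - \epsilon_b(G)$ edges, and no subgraph obtained by edge deletion can be bipartite with more edges. Fixing a bipartition $V = X \cup Y$ that witnesses this maximum, with $\vert X \vert = a$ and $\vert Y \vert = n - a$, every retained edge joins $X$ to $Y$, so their number is at most $a(n-a)$. Hence $\lambda_n^2(G) \leq m - \epsilon_b(G) \leq a(n-a)$.

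It remains to maximise $a(n-a)$ over $0 \leq a \leq n$, an elementary one-variable optimisation whose value is largest when the two parts are as balanced as possible, giving $a(n-a) \leq \lfloor n^2/4 \rfloor$; this equals $k^2$ for $n = 2k$ and $k(k+1)$ for $n = 2k+1$. Combining with the chain of inequalities above finishes both parts.

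I do not anticipate a serious obstacle: the analytic content is entirely absorbed into Corollary \ref{lambda-n-bounds}, and what remains is the classical fact that a bipartite graph on $n$ vertices has at most $\lfloor n^2/4 \rfloor$ edges. The only step deserving a line of justification is the identification of $m - \epsilon_b(G)$ with the edge count of a maximum spanning bipartite subgraph, after which the counting bound $a(n-a) \leq \lfloor n^2/4\rfloor$ is immediate and the case split on the parity of $n$ is routine.
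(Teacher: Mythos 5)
Your proposal is correct and follows essentially the same route as the paper: both feed Corollary \ref{lambda-n-bounds} into the observation that $m-\epsilon_b(G)$ is the edge count of a maximum bipartite subgraph, bound that by the product of the two part sizes, and then optimise over the bipartition sizes (your $\lfloor n^2/4\rfloor$ bound is exactly the paper's AM--GM plus integrality case split). No gaps.
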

\begin{proof}
    From Corollary \ref{lambda-n-bounds}, we have
    $$\lambda_n^2(G) \leq m - \epsilon_b(G).$$
    By the definition of $\epsilon_b(G)$, $m - \epsilon_b(G)$ is the number of edges in a maximal bipartite subgraph of $G$. Let $V_1\cup V_2$ be a bipartition of the vertex set of a maximal bipartite subgraph obtained by removing $\epsilon_b(G)$ edges. Then
    $$ \vert\lambda_n(G)\vert \leq \sqrt{m- \epsilon_b(G)} \leq \sqrt{|V_1||V_2|}.$$
We have the following two cases:
    \begin{enumerate}
        \item If $|V_1| + |V_2| = n = 2k$, then by AM-GM inequality, $ \sqrt{|V_1||V_2|} \leq k.$
        \item If $|V_1| + |V_2| = n = 2k+1$, then let $|V_1| = k+a$, $|V_2| = k+1 -a$, where $a$ is an integer. Then $\sqrt{|V_1||V_2|} =  \sqrt{(k+a)(k+1-a)} = \sqrt{k(k+1)+a-a^2} \leq \sqrt{k(k+1)}$, as $a$ is an integer. \qedhere
    \end{enumerate}
\end{proof}
Next we recall the classical Tura\'{n} inequality for the unsigned graphs.

\begin{theorem}
	Let $G$ be a graph on $n$ vertices and $m$ edges with clique number $\omega$. Then $$m \leq \frac{n^2}{2}\bigg(1 - \frac{1}{\omega(\Sigma)}\bigg).$$
\end{theorem}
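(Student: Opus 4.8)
The plan is to obtain this bound as an immediate consequence of the Motzkin--Straus theorem (Theorem \ref{MS theorem}) by testing the quadratic form at a single, well-chosen point of the simplex $S_n^{+}$. Theorem \ref{MS theorem} asserts that $\max_{x \in S_n^{+}} x^{T}A(G)x = \frac{\omega-1}{\omega}$, so \emph{any} admissible choice of $x$ produces a lower bound for this maximum, which in turn yields an upper bound on the edge count. The strategy is therefore to pick the most symmetric vector available and read off the inequality.

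First I would take the uniform vector $x = (1/n, \dots, 1/n)$. This lies in $S_n^{+}$ since its entries are nonnegative and sum to $1$. Evaluating the Motzkin--Straus quadratic form at this point gives
$$ x^{T}A(G)x = \frac{1}{n^2}\sum_{i,j} a_{ij} = \frac{2m}{n^2}, $$
where the factor of $2$ arises because each edge of $G$ contributes to two symmetric off-diagonal entries of $A(G)$.

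Next, since $x \in S_n^{+}$, Theorem \ref{MS theorem} gives $x^{T}A(G)x \leq \frac{\omega-1}{\omega}$, and hence $\frac{2m}{n^2} \leq 1 - \frac{1}{\omega}$. Rearranging yields $m \leq \frac{n^2}{2}\big(1 - \frac{1}{\omega}\big)$, which is the claimed bound.

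I expect no genuine obstacle: the whole argument is a one-point evaluation of the Motzkin--Straus maximum, and the proof is essentially a single line once the test vector is chosen. The only points deserving mild care are verifying that the uniform vector genuinely belongs to $S_n^{+}$ and correctly tracking the factor of $2$ in the quadratic form; an alternative combinatorial route via Turán's classical induction would work too but is unnecessary here. I note in passing that the displayed statement writes $\omega(\Sigma)$, whereas the object under discussion is the unsigned graph $G$, so this should read $\omega = \omega(G)$.
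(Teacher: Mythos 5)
Your proof is correct. Note that the paper itself offers no proof of this statement at all: it is simply recalled as the classical Tur\'{a}n-type inequality (and an author comment in the source even flags that it is the weaker ``concentration'' form rather than Tur\'{a}n's theorem proper). Your derivation --- evaluating the Motzkin--Straus quadratic form of Theorem \ref{MS theorem} at the uniform vector $x=(1/n,\dots,1/n)\in S_n^{+}$, computing $x^TA(G)x=2m/n^2$, and bounding this by the maximum $\frac{\omega-1}{\omega}$ --- is the standard and complete way to obtain the bound, and it is consistent in spirit with the paper, which repeatedly leans on Theorem \ref{MS theorem} for its spectral estimates. You are also right that the $\omega(\Sigma)$ in the displayed statement is a typo for $\omega(G)$.
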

The following theorem extends Tur\'{a}n's inequality for the signed graphs in terms of the balanced clique number and frustration index.
\begin{theorem}
    Let $\Sigma = (G, \sigma)$ be a signed graph with $n$ vertices, $m$ edges, balanced clique number $\omega_{b}(\Sigma)$ and the frustration index $\epsilon(\Sigma)$. Then
    $$ m \leq \epsilon(\Sigma) + \frac{n^2}{2}\bigg(1 - \frac{1}{\omega_{b}(\Sigma)}\bigg). $$

\end{theorem}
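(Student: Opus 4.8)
The plan is to reduce the signed statement to the classical Turán inequality stated just above, applied to the subgraph of positive edges of a suitably chosen representative of the switching class of $\Sigma$. The guiding observation is that Turán controls $m^{+}$, the number of positive edges, of \emph{any} switching-equivalent graph, while the total edge count $m = m^{+} + m^{-}$ is switching-invariant. Hence, if we exhibit a representative whose negative edges are as few as possible, namely exactly $\epsilon(\Sigma)$, the two facts combine to give the claim. This mirrors the mechanism of Theorem \ref{nikiforov-signed-general}, but replaces the Motzkin--Straus/Cauchy--Schwarz step by classical Turán and uses the \emph{sharp} count of negative edges rather than the one-sided bound of Lemma \ref{frustation-max-positive}.

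First I would establish the existence of $\Sigma' \sim \Sigma$ with $m^{-}(\Sigma') = \epsilon(\Sigma)$. Choose a set $F$ of $\epsilon(\Sigma)$ edges whose removal leaves a balanced graph $\Sigma - F$. A balanced signed graph is switching equivalent to the all-positive graph on its underlying structure, so there is a switching function $\eta$ making every edge of $(\Sigma - F)^{\eta}$ positive. Since the switched sign of an edge depends only on its original sign and the values of $\eta$ at its two endpoints, applying the same $\eta$ to all of $\Sigma$ leaves every edge outside $F$ positive; thus $\Sigma' = \Sigma^{\eta}$ has at most $|F| = \epsilon(\Sigma)$ negative edges. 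Combined with Lemma \ref{frustation-max-positive}, which gives $m^{-}(\Sigma') \geq \epsilon(\Sigma)$, we conclude $m^{-}(\Sigma') = \epsilon(\Sigma)$ and therefore $m^{+}(\Sigma') = m - \epsilon(\Sigma)$.

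Next I would pass to the positive subgraph $\Sigma'_{+}$ consisting of the $m - \epsilon(\Sigma)$ positive edges of $\Sigma'$, regarded as an unsigned graph on the same $n$ vertices. Any clique of $\Sigma'_{+}$ is a set of vertices pairwise joined by positive edges of $\Sigma'$, hence it induces a complete, all-positive, and so balanced, subgraph of $\Sigma'$; this yields $\omega(\Sigma'_{+}) \leq \omega_{b}(\Sigma') = \omega_{b}(\Sigma)$. Applying the classical Turán inequality to $\Sigma'_{+}$ and using that $1 - 1/\omega$ is increasing in $\omega$ gives
$$ m - \epsilon(\Sigma) = m^{+}(\Sigma') \leq \frac{n^2}{2}\bigg(1 - \frac{1}{\omega(\Sigma'_{+})}\bigg) \leq \frac{n^2}{2}\bigg(1 - \frac{1}{\omega_{b}(\Sigma)}\bigg), $$
and rearranging is exactly the desired bound.

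The only genuinely nontrivial step is the first one: converting the deletion description of the frustration index into a switching with \emph{precisely} $\epsilon(\Sigma)$ negative edges. The upper bound $m^{-}(\Sigma') \leq \epsilon(\Sigma)$ is the crux there, while the matching lower bound is already supplied by Lemma \ref{frustation-max-positive}; everything afterward is a direct appeal to classical Turán together with the switching-invariance of $\omega_{b}$ and the monotonicity of $1 - 1/\omega$. I expect no obstacle beyond making the edge-local nature of switching explicit enough that $(\Sigma - F)^{\eta}$ and $\Sigma^{\eta}$ agree off $F$.
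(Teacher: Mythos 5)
Your proposal is correct and follows essentially the same route as the paper: both arguments take a minimum set of $\epsilon(\Sigma)$ edges whose deletion balances $\Sigma$, switch so that the remaining $m-\epsilon(\Sigma)$ edges become positive, observe that every clique in this all-positive part is a balanced clique of $\Sigma$ (so its clique number is at most $\omega_b(\Sigma)$), and finish with the classical Tur\'{a}n inequality. Your version is only slightly more explicit than the paper's in pinning down that the switched representative has exactly $\epsilon(\Sigma)$ negative edges, but no new idea is involved.
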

\begin{proof}
    Since $\epsilon(\Sigma)$ is the frustration index, so there exists a set, say $S$,  of $\epsilon(\Sigma)$ edges such that the signed graph $\Sigma \setminus S$ is balanced. Let $\eta$ be the switching function which switches $\Sigma \setminus S$ to a signed graph with all positive edges. Let $\omega$ be the clique number of $\Sigma \setminus S$. By  Lemma \ref{frustation-max-positive} and  Tur\'{a}n's theorem for unsigned graphs, we have
    $$ m^{+}(\Sigma\setminus S) \leq m-\epsilon(\Sigma)  \leq \frac{n^2}{2}\bigg(1- \frac{1}{\omega}\bigg).$$
    As $\omega \leq \omega_{b}(\Sigma)$, we  get

    $$ m \leq \epsilon(\Sigma) + \frac{n^2}{2}\bigg(1 - \frac{1}{\omega_{b}(\Sigma)}\bigg). $$
\end{proof}

For a real number $x$, let $\floor x $ denote the greatest integer less than or equal $x$.
For a simple graph $G$ on $m$ edges, Stanley \cite{stanley1987bound} established that  $\rho(A(G))\leq \frac{1}{2}(-1+\sqrt{1+8m}).$  Later Nikiforov \cite{nikiforov2020turan} proved  the following improved bound $$\rho(A(G))\leq \sqrt{2 \Big(1-\Big\lfloor
    \frac{1}{2} + \sqrt{2m + \frac{1}{4}} \Big\rfloor ^{-1}\Big)m}.$$

We extend  Nikiforov's bound for the signed graphs in the next theorem.
The proof idea is similar to that of Nikiforov \cite{nikiforov2020turan}.
\begin{theorem}\label{Improved-stanley-bound}
    Let $\Sigma = (G, \sigma)$ be a signed graph with $n$ vertices, $m$ edges, balanced clique number $\omega_{b}(\Sigma)$ and frustration index $\epsilon(\Sigma)$. Then
    $$\lambda_1(\Sigma) \leq  \sqrt{2\big(m-\epsilon(\Sigma)\big)\bigg(1- \floor[\Big]{\frac{1}{2} + \sqrt{2(m-\epsilon(\Sigma)) + \frac{1}{4}}}^{-1}\ \bigg)},$$ where $\lambda_1(\Sigma)$ is the largest eigenvalue of $A(\Sigma).$
\end{theorem}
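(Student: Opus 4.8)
The plan is to deduce this from the already-established estimate in Theorem~\ref{nikiforov-signed-general}, namely
$\lambda_1^2(\Sigma) \le 2\big(m - \epsilon(\Sigma)\big)\big(1 - \tfrac{1}{\omega_b(\Sigma)}\big)$,
by controlling the balanced clique number $\omega_b(\Sigma)$ from above purely in terms of the edge budget $m - \epsilon(\Sigma)$. Viewed as a function of $\omega_b(\Sigma)$, the right-hand side of Theorem~\ref{nikiforov-signed-general} is increasing, so a large balanced clique only seems to weaken the bound. The point of the Nikiforov-style argument is that a large balanced clique forces many edges, and that edge count is in turn capped by $m - \epsilon(\Sigma)$. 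Consequently $\omega_b(\Sigma)$ cannot exceed a threshold depending only on $m - \epsilon(\Sigma)$, and substituting that threshold into Theorem~\ref{nikiforov-signed-general} produces the stated bound.

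The key step I would isolate as a claim is the inequality $\binom{\omega_b(\Sigma)}{2} \le m - \epsilon(\Sigma)$. To prove it, take a balanced complete subgraph $K$ on $\omega_b(\Sigma)$ vertices. Since $K$ is balanced, there is a switching function on $V(K)$ making all its edges positive; extend it arbitrarily to $V(G)$ and switch $\Sigma$ to a representative $\Sigma'$. Then $K$ lies inside the positive subgraph $\Sigma'_{+}$, so $\Sigma'_{+}$ contains at least $\binom{\omega_b(\Sigma)}{2}$ edges, that is $m^{+}(\Sigma') \ge \binom{\omega_b(\Sigma)}{2}$. On the other hand, Lemma~\ref{frustation-max-positive} gives $m^{+}(\Sigma') \le m - \epsilon(\Sigma)$. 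Combining the two yields the claim. Solving the resulting quadratic inequality $\omega_b(\Sigma)^2 - \omega_b(\Sigma) - 2\big(m - \epsilon(\Sigma)\big) \le 0$ and using integrality of $\omega_b(\Sigma)$ gives $\omega_b(\Sigma) \le \floor[\big]{\tfrac12 + \sqrt{2(m - \epsilon(\Sigma)) + \tfrac14}}$.

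Finally, I would feed this upper bound back into Theorem~\ref{nikiforov-signed-general}. Assuming $m > \epsilon(\Sigma)$ (otherwise $m - \epsilon(\Sigma) = 0$ and the bound is trivially $0$), the map $\omega \mapsto 2\big(m - \epsilon(\Sigma)\big)\big(1 - 1/\omega\big)$ has derivative $2(m-\epsilon(\Sigma))/\omega^2 > 0$ and is therefore strictly increasing, so replacing $\omega_b(\Sigma)$ by the larger integer $\floor[\big]{\tfrac12 + \sqrt{2(m - \epsilon(\Sigma)) + \tfrac14}}$ only increases the right-hand side; taking square roots gives precisely the claimed inequality. I expect the main obstacle to be the claim $\binom{\omega_b(\Sigma)}{2} \le m - \epsilon(\Sigma)$: one must be careful that the switching used to expose $K$ as a positive clique is chosen independently, and that Lemma~\ref{frustation-max-positive} applies uniformly to every member of the switching class so that the edge bound $m^{+}(\Sigma') \le m - \epsilon(\Sigma)$ holds for this particular representative. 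Once that point is settled, the remaining steps are routine algebra and a monotonicity check.
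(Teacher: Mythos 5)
Your proposal is correct and follows essentially the same route as the paper's proof: the same switching of a balanced maximum clique to an all-positive clique, the same application of Lemma~\ref{frustation-max-positive} to get $\binom{\omega_b(\Sigma)}{2} \le m^{+}(\Sigma') \le m-\epsilon(\Sigma)$, the same floor via integrality, and the same substitution into Theorem~\ref{nikiforov-signed-general}. The only difference is that you make explicit the monotonicity check that the paper leaves implicit, which is a welcome addition but not a different argument.
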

\begin{proof}
    Let $\Omega$ be a balanced complete subgraph of $\Sigma$ with $\omega_{b}(\Sigma)$ vertices. Let $\eta'$ be a switching function on $\Omega$ which switches all the edges of $\Omega$ to positive. Define the switching function  $\eta$ on $\Sigma$ as follows:
    \begin{gather*}
        \eta (v_i) = \begin{cases}
            \eta' (v_i)\quad \text{if} \quad v_i \in \Omega, \\
            1   \quad \text{otherwise}.
        \end{cases}
    \end{gather*}
    Let $\Sigma'$ be the signed graph obtained from $\Sigma$ by applying the switching $\eta$. Then, by Lemma \ref{frustation-max-positive},
    $$ {\omega_{b}(\Sigma)\choose 2} \leq m^{+}(\Sigma') \leq m - \epsilon(\Sigma).$$
    By solving the above expression  for $\omega_{b}(\Sigma)$, we get
    $$ \omega_b(\Sigma) \leq \frac{1}{2} + \sqrt{2(m-\epsilon(\Sigma)) + \frac{1}{4}}.$$
    The result follows by noting that $\omega_{b}(\Sigma)$ is a positive number and hence
    $$ \omega_b(\Sigma) \leq \floor[\bigg]{\frac{1}{2} + \sqrt{2(m-\epsilon(\Sigma)) + \frac{1}{4}}}. $$ Substituting this version in Theorem \ref{nikiforov-signed-general} proves the result.
\end{proof}

As a corollary of above the theorem we get the following result of Stani\'{c}.
\begin{corollary}[{\cite[Theorem 4.2]{stanic2019bounding}}]\label{eigenbound-frust}
    Let $\Sigma = (G, \sigma)$ be a signed graph with $n$ vertices, $m$ edges and frustration index $\epsilon(\Sigma)$. Then  $$ \lambda_1(\Sigma) \leq \sqrt{2(m-\epsilon(\Sigma)) +  \frac{1}{4}} - \frac{1}{2}, $$ where $\lambda_1(\Sigma) $ is the largest eigenvalue of  $A(\Sigma)$.
\end{corollary}
\begin{proof}
    From the proof of the above theorem, we have
    $$ \omega_b(\Sigma) \leq \frac{1}{2} + \sqrt{2(m-\epsilon(\Sigma)) + \frac{1}{4}}. $$
    Substituting the above bound in Theorem \ref{nikiforov-signed-general}, we get
    \begin{align*}
        \lambda_1(\Sigma) &\leq \sqrt{2\big(m-\epsilon(\Sigma)\big)\bigg(1- \frac{1}{\omega_{b}(\Sigma)}\bigg)} \\ &\leq \sqrt{2\big(m-\epsilon(\Sigma)\big)\frac{-\frac{1}{2} + \sqrt{2(m-\epsilon(\Sigma)) + \frac{1}{4}}}{\frac{1}{2} + \sqrt{2(m-\epsilon(\Sigma)) + \frac{1}{4}}}}  \\ &= -\frac{1}{2} + \sqrt{2(m-\epsilon(\Sigma)) + \frac{1}{4}}.
    \end{align*} \qedhere
\end{proof}

Let us recall the definition of the MS-index of a signed graph $\Sigma = (G, \sigma)$:
$ \mu(\Sigma) = \max_{x \in S_{n}^{\pm}} \sum_{i \sim j} \sigma(e_{ij}) x_ix_j = \max_{x \in S_{n}^{\pm}} \frac{x^TA(\Sigma)x}{2}. $
Next, we give an alternate proof for Theorem  \ref{Signed-MS theorem}.

\begin{theorem}[{\cite[Theorem 5]{wang2021eigenvalues}}]
  Let $\Sigma = (G, \sigma)$ be a signed graph with balanced clique number $\omega_{b}(\Sigma)$.
  Then $$ \mu(\Sigma) = \frac{1}{2} \bigg(\frac{\omega_b(\Sigma)-1}{\omega_b(\Sigma)}\bigg). $$
\end{theorem}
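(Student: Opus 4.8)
The plan is to prove the two inequalities $\mu(\Sigma) \geq \frac12\frac{\omega_b(\Sigma)-1}{\omega_b(\Sigma)}$ and $\mu(\Sigma) \leq \frac12\frac{\omega_b(\Sigma)-1}{\omega_b(\Sigma)}$ separately, mirroring the switching-plus-Motzkin--Straus strategy already used in the proof of Theorem \ref{nikiforov-signed-general}. The starting observation, which I would record first, is that $\mu$ is invariant under switching: if $A(\Sigma') = D_{\eta}^{-1} A(\Sigma) D_{\eta}$ with $D_{\eta}$ a diagonal $\pm 1$ matrix, then $x \mapsto D_{\eta}x$ is a bijection of $S_n^{\pm}$ onto itself (it merely flips signs of coordinates, leaving $\sum |x_i|$ unchanged) which carries $x^T A(\Sigma')x$ to $(D_{\eta}x)^T A(\Sigma)(D_{\eta}x)$, so $\mu(\Sigma) = \mu(\Sigma')$. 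Combined with the earlier lemma that $\omega_b$ is switching invariant, this lets me compute $\mu$ in whichever switching representative is most convenient.

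For the lower bound, I would take a balanced clique $\Omega$ on $\omega_b(\Sigma)$ vertices and switch it so that all of its edges become positive, obtaining $\Sigma'$. Testing the definition of $\mu(\Sigma')$ with the vector $x$ equal to $1/\omega_b(\Sigma)$ on each vertex of $\Omega$ and $0$ elsewhere (which lies in $S_n^{\pm}$ since $\sum |x_i| = 1$), only the $\binom{\omega_b(\Sigma)}{2}$ edges inside $\Omega$ contribute, and all are positive, giving exactly $\binom{\omega_b(\Sigma)}{2}\,\omega_b(\Sigma)^{-2} = \frac12\frac{\omega_b(\Sigma)-1}{\omega_b(\Sigma)}$. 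Switching invariance then transfers this value back to $\mu(\Sigma)$.

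For the upper bound, I would let $x \in S_n^{\pm}$ attain $\mu(\Sigma)$, set $y = (|x_1|,\dots,|x_n|) \in S_n^{+}$, and switch by $\eta(v_i) = \sgn(x_i)$ to pass to $\Sigma'$; exactly as in Theorem \ref{nikiforov-signed-general}, this replaces the signed form by $\sum_{i \sim j}\sigma'_{ij} y_i y_j$. Discarding the negative-edge terms (legitimate because $y \geq 0$) bounds this by $\sum_{\sigma'_{ij}=1} y_i y_j = \tfrac12\, y^T A(\Sigma'_{+})y$, where $\Sigma'_{+}$ is the all-positive subgraph. Since $\Sigma'_{+}$ is an ordinary graph and $y \in S_n^{+}$, the classical Motzkin--Straus theorem (Theorem \ref{MS theorem}) gives $\tfrac12\, y^T A(\Sigma'_{+})y \leq \frac12\frac{\omega(\Sigma'_{+})-1}{\omega(\Sigma'_{+})}$.

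The crux of the argument is the final comparison $\omega(\Sigma'_{+}) \leq \omega_b(\Sigma)$: every clique of $\Sigma'_{+}$ consists of positive edges only, hence is a balanced clique of $\Sigma'$, so $\omega(\Sigma'_{+}) \leq \omega_b(\Sigma') = \omega_b(\Sigma)$ by the switching invariance of the balanced clique number; the monotonicity of $t \mapsto (t-1)/t$ then yields $\mu(\Sigma) \leq \frac12\frac{\omega_b(\Sigma)-1}{\omega_b(\Sigma)}$, and combining the two bounds gives equality. I expect this clique-number comparison, together with keeping the switching bookkeeping consistent, to be the only genuinely delicate point; the remaining manipulations are routine.
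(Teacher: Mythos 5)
Your proposal is correct and follows essentially the same route as the paper's proof: an upper bound via switching by $\eta = \sgn(x)$ at an optimizer, discarding negative-edge terms, and applying the classical Motzkin--Straus theorem to the all-positive subgraph with $\omega(\Sigma'_{+}) \leq \omega_b(\Sigma)$, plus a matching lower bound from the uniform vector supported on a switched balanced clique. Your explicit justifications of the switching invariance of $\mu$ and of the clique-number comparison are points the paper merely asserts, but the argument is the same.
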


\begin{proof}
    Let $x=(x_1,\dots, x_n)  \in S_n^{\pm}$ be a vector such that  $\mu(\Sigma) =  \sum_{i \sim j} x_ix_j $.  Define $y = (\vert x_1 \vert, \dots, \vert x_n \vert)$. Consider the switching function $\eta : V(G) \rightarrow \{1, -1\}$ defined as follows:  \begin{gather*}
        \eta (v_i) = \begin{cases}
            \sgn{x_i}\quad \text{if} \quad x_i \neq 0, \\
            1   \quad \text{otherwise}.
        \end{cases}
    \end{gather*}Now, switch the signed graph $\Sigma$ to $\Sigma'$ by using the switching function $\eta$ as defined above.  Then $A(\Sigma') = D_{\eta}^{-1}A(\Sigma)D_{\eta}$. Note that  $\mu(\Sigma) = \mu(\Sigma')$ and $\omega_{b}(\Sigma)=\omega_{b}(\Sigma')$. Now,
    \begin{align*}
        2\mu(\Sigma) &= x^TA(\Sigma)x  \\
        &= y^TD_{\eta}^{-1}A(\Sigma)D_{\eta}y\\
        &= y^TA(\Sigma')y  \\
        &= 2\sum_{i \sim j}\sigma'_{ij}y_iy_j  ~~~~~~~~~~[\mbox{$\sigma'_{ij}$ is the $(i, j)^{th}$ entry of $A(\Sigma')$} ]\\
        &= \sum_{i \sim j,\ \sigma'_{ij}=1}2y_iy_j - \sum_{i \sim j,\ \sigma'_{ij}=-1}2y_iy_j.
    \end{align*}
    As all the entries of the vector $y$ are non-negative, we have
    \begin{gather*}
        2\mu(\Sigma) \leq \sum_{i \sim j,\ \sigma'_{ij}=1}2y_iy_j.
    \end{gather*}
    Let  $\Sigma'_{+}$ be the signed subgraph of $\Sigma'$ consisting of all the positive edges of $\Sigma'$. Let $m^+$ be the number of edges in  $\Sigma'_{+}$,  and $\omega(\Sigma'_{+})$ be the  clique number of $\Sigma'_{+}$. By  Theorem \ref{MS theorem}, we have
    \begin{gather*}
        \sum_{i \sim j,\ \sigma'_{ij}=1}2y_iy_j \leq 2\mu(\Sigma'_{+}) = \frac{\omega(\Sigma'_{+})-1}{\omega(\Sigma'_{+})}.
    \end{gather*}
    Since $\omega(\Sigma'_{+}) \leq \omega_{b}(\Sigma)$, we have
    $$ \mu(\Sigma) \leq \frac{1}{2}\frac{\omega_b(\Sigma)-1}{\omega_b(\Sigma)}. $$
    Now, let $\Omega$ be a balanced complete subgraph of $\Sigma$ with $\omega_{b}(\Sigma)$ vertices. Let $\eta'$ be a switching function on $\Omega$ which switches all the edges of $\Omega$ to positive. Define the switching function  $\eta$ on $\Sigma$ as follows:

    \begin{gather*}
        \eta (v_i) = \begin{cases}
            \eta' (v_i)\quad \text{if} \quad v_i \in \Omega, \\
            1   \quad \text{otherwise}.
        \end{cases}
    \end{gather*}

    Let $\Sigma''$ be the signed graph obtained from $\Sigma$ by applying the switching function $\eta$.  Let $z = (z_1,z_2,\dots,z_n)^T$ be the vector given by
    \begin{gather*}
        z_i = \begin{cases}
            \frac{1}{\omega_{b}(\Sigma)} \quad \text{if} \quad v_i \in \Omega, \\
            0   \quad \text{otherwise}.
        \end{cases}
    \end{gather*}
    Then, we have
    $$ z^TA(\Sigma')z = \frac{\omega_b(\Sigma)-1}{\omega_b(\Sigma)} \leq 2\mu(\Sigma') = 2\mu(\Sigma),$$
    and thus we have the result.
\end{proof}

\section{ Bollob\'{a}s and Nikiforov's conjecture: Signed version}\label{signed-niki}
In this section, we prove Lin et al.'s result for the signed graph with an additional restriction, and we also give a counterexample for the result when the restriction is dropped. This example also shows that the conjecture posed by Nikiforov and Bollob\'{a}s does not hold for signed graphs in general. Then we derive a bound for the largest eigenvalue of a signed graph in terms of the number of edges and the number of triangles in it.

In \cite{bollobas2007cliques}, Bollob\'{a}s and Nikiforov proposed the following conjecture.
\begin{conj}\label{bol-niki-conj}
    If $G$ is a $K_{r+1}$-free graph of order at least $r+1$ with $m$ edges, and  $\lambda_1 \geq \lambda_2 \geq \dots \geq \lambda_n$ are the eigenvalues of $A(G)$, then $\lambda_1^2 + \lambda_2^2 \leq \frac{r-1}{r}2m.$
\end{conj}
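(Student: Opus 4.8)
The plan is to mimic Nikiforov's Motzkin--Straus proof of Theorem \ref{nikiforov_2m}, but to run it simultaneously for the top two eigenvalues, and then to deploy the majorization machinery of Section~2 to remove the spurious factor of $2$ that a naive two--vector argument produces. First, since $G$ is $K_{r+1}$-free we have $\omega(G)\le r$, hence $\frac{\omega(G)-1}{\omega(G)}\le\frac{r-1}{r}$, so it suffices to establish the sharper inequality $\lambda_1^2+\lambda_2^2\le 2m\,\frac{\omega(G)-1}{\omega(G)}$. Fix orthonormal eigenvectors $u$ and $v$ for $\lambda_1$ and $\lambda_2$ (note $u^{T}A(G)v=0$ always holds). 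Writing $\lambda_1=2\sum_{ij\in E}u_iu_j$ and $\lambda_2=2\sum_{ij\in E}v_iv_j$ and collecting the edge contributions into vectors $a=(u_iu_j)_{ij\in E}$ and $b=(v_iv_j)_{ij\in E}$ in $\mathbb{R}^m$, one obtains $\lambda_1^2+\lambda_2^2=4\big(\langle a,\mathbf 1\rangle^2+\langle b,\mathbf 1\rangle^2\big)$, where $\mathbf 1$ denotes the all--ones vector of length $m$.

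Next I would bound each block by Motzkin--Straus. Because $u$ and $v$ are unit vectors, the nonnegative vectors $(u_i^2)$ and $(v_i^2)$ lie in $S_n^{+}$, so Theorem \ref{MS theorem} gives $\|a\|_2^2=\sum_{ij\in E}u_i^2u_j^2\le\tfrac12\,\frac{\omega(G)-1}{\omega(G)}$ and the analogous bound for $b$. Applying Cauchy--Schwarz in the form $\langle a,\mathbf 1\rangle^2+\langle b,\mathbf 1\rangle^2\le m\big(\|a\|_2^2+\|b\|_2^2\big)$ then yields $\lambda_1^2+\lambda_2^2\le 4m\,\frac{\omega(G)-1}{\omega(G)}$, which misses the target by exactly a factor of $2$. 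The loss occurs because this Cauchy--Schwarz step is tight only when both $a$ and $b$ point along $\mathbf 1$, so the remaining task is to show that the orthogonality $u\perp v$ forbids this alignment.

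The decisive step, which I expect to be the main obstacle, is to convert the constraint $\langle u,v\rangle=0$ into a quantitative statement about the $2\times m$ matrix $M$ with rows $a,b$: namely that $\|M\mathbf 1\|_2^2\le m\,\lambda_{\max}(MM^{T})$ with $\lambda_{\max}(MM^{T})\le\tfrac12\,\frac{\omega(G)-1}{\omega(G)}$, so that the unused half of the budget $\|a\|_2^2+\|b\|_2^2$ is forced on us by orthogonality. I would try to encode this as a majorization, showing that $(\lambda_1^2,\lambda_2^2)$ is weakly majorized by a vector whose entries are dictated by the clique structure and whose total is $2m\,\frac{\omega(G)-1}{\omega(G)}$, and then finish with the $\ell_p$ monotonicity of Theorem \ref{rev_majorization_thm} together with the interlacing of Theorem \ref{interlacing-signed} applied to $A(G)^2$ and to a principal submatrix supported on a maximum clique. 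The calibration point throughout is the Tur\'an graph $T(n,r)$, for which $\lambda_2=0$ and equality already holds through $\lambda_1$ alone, so any correct argument must degrade gracefully to that case. The genuine difficulty is that for $r\ge 3$ the factor-of-$2$ gap does not close by any edge--by--edge estimate; this last step must inject global structural information about $K_{r+1}$-free graphs, and it is precisely here that the statement remains delicate.
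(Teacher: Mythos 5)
The statement you were asked to prove is not a theorem of the paper at all: it is the Bollob\'{a}s--Nikiforov conjecture, which the paper records as a \emph{conjecture} precisely because no proof is known. The paper only cites Lin et al.\ \cite{lineigenvalues} for the case $r=2$ (triangle-free graphs), proves a signed analogue of that special case under an extra hypothesis (Theorem \ref{nikiforov-triangle-result}), and gives a signed counterexample; there is no ``paper proof'' to compare against, and a complete blind proof would be a significant new result. Your proposal, to its credit, does not really claim to be one. Its first two paragraphs are correct mathematics, but they merely reproduce Nikiforov's proof of Theorem \ref{nikiforov_2m} run once for $u$ and once for $v$: Motzkin--Straus (Theorem \ref{MS theorem}) plus Cauchy--Schwarz gives $\lambda_1^2\le 2m\,\frac{\omega(G)-1}{\omega(G)}$ and $\lambda_2^2\le 2m\,\frac{\omega(G)-1}{\omega(G)}$ separately, hence the sum bound with the spurious factor $2$ that you yourself identify.

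The gap is the entire ``decisive step,'' and it is not a technical loose end but the whole content of the conjecture. Two concrete problems. First, orthogonality $\langle u,v\rangle=0$ in $\mathbb{R}^n$ gives essentially no control over the edge vectors $a,b\in\mathbb{R}^m$: the inner product $\langle a,b\rangle=\sum_{ij\in E}u_iu_jv_iv_j$ need not vanish or even be small, so there is no route from $u\perp v$ to a bound of the form $\lambda_{\max}(MM^{T})\le\tfrac12\,\frac{\omega(G)-1}{\omega(G)}$ by any edge-wise estimate, exactly as you concede. Second, the proposed majorization finish is circular: for the two-entry vector $(\lambda_1^2,\lambda_2^2)$, weak majorization by a vector with largest entry $y_1$ and total $2m\,\frac{\omega(G)-1}{\omega(G)}$ \emph{means} the pair of inequalities $\lambda_1^2\le y_1$ and $\lambda_1^2+\lambda_2^2\le 2m\,\frac{\omega(G)-1}{\omega(G)}$, the second of which is verbatim the conjecture; so ``showing the majorization'' presupposes what is to be proved, and neither Theorem \ref{rev_majorization_thm} nor interlacing (Theorem \ref{interlacing-signed}) applied to $A(G)^2$ can generate it. It is also worth noting that in the one case where this circle has been closed --- the triangle-free case of Lin et al., mirrored in the paper's Theorem \ref{nikiforov-triangle-result} --- the majorization machinery is used in the opposite direction: one assumes the inequality fails, deduces $\Vert x\Vert_{3/2}^{3/2}>\Vert y\Vert_{3/2}^{3/2}$ for suitable vectors of squared eigenvalues, and contradicts the trace identity $\sum_i\lambda_i^3=6t(G)=0$. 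That argument collapses for $r\ge 3$ because $\sum_i\lambda_i^3$ is no longer forced to vanish or have a sign. So your proposal is an honest reduction of the conjecture to its known hard core, not a proof.
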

Recently Lin et al. \cite{lineigenvalues} confirmed this conjecture for the triangle-free graphs.

\begin{theorem}[{\cite[Theorem 1.2]{lineigenvalues}}]
    Let $G$ be a triangle-free graph on $n$ vertices and $m$ edges with $n \geq 3$. Then $\lambda_1^2 + \lambda_2^2 \leq m.$
\end{theorem}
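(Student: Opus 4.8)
The plan is to fix an orthonormal pair of top eigenvectors, reduce the claim to a few scalar facts about triangle-free graphs that already fall out of the Motzkin--Straus machinery recorded above, and then package everything through the majorization inequality of Theorem~\ref{rev_majorization_thm} with $p=2$. First I would record the single-eigenvalue bound: applying Theorem~\ref{nikiforov_2m} (equivalently Theorem~\ref{MS theorem}) with $\omega(G)\le 2$ gives Nosal's inequality $\lambda_1^2\le m$. I would then combine this with $\Tr A=\sum_i\lambda_i=0$ to show $\lambda_2\ge 0$: if $\lambda_2<0$ then $\lambda_1$ is the only nonnegative eigenvalue, so $\lambda_1=\sum_{i\ge 2}|\lambda_i|$ and hence $\lambda_1^2\ge\sum_{i\ge 2}\lambda_i^2=2m-\lambda_1^2$, i.e.\ $\lambda_1^2\ge m$; with Nosal this forces the spectrum $\{\sqrt m,0,\dots,0,-\sqrt m\}$, contradicting $\lambda_2<0$. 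Thus one may assume $\lambda_1\ge\lambda_2\ge 0$, which is exactly what makes a $p=2$ norm comparison of the nonnegative pair $(\lambda_1,\lambda_2)$ meaningful.

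The heart of the argument is a clean reformulation. Let $x,y$ be orthonormal eigenvectors for $\lambda_1,\lambda_2$, assemble them into the $2\times n$ matrix $F=\binom{x^{T}}{y^{T}}$, and write $f(v)=(x_v,y_v)\in\mathbb{R}^2$ for its columns. Orthonormality gives $FF^{T}=I_2$ and $FAF^{T}=\diag(\lambda_1,\lambda_2)$, so
\[
\lambda_1^2+\lambda_2^2=\|FA\|_F^2=\sum_{v\in V}\Big\|\sum_{u\sim v}f(u)\Big\|^2 .
\]
The plan is to bound the right-hand side using triangle-freeness, which enters in two equivalent guises: every edge $uv$ has $N(u)\cap N(v)=\varnothing$, i.e.\ $A\circ A^{2}=0$ (equivalently $\Tr A^{3}=0$), and each $N(v)$ is independent. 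Expanding $\big\|\sum_{u\sim v}f(u)\big\|^2=\sum_{u,u'\sim v}\langle f(u),f(u')\rangle$ and summing over $v$ separates a diagonal part $\sum_u d_u\,r_u^2$, where $r_u^2=\|f(u)\|^2=x_u^2+y_u^2$, from an off-diagonal part supported only on common-neighbour (hence non-adjacent) pairs. Since the nonnegative vector $r^{\circ 2}=(x_v^2+y_v^2)_v$ has $\sum_v r_v^2=2$, the vector $r^{\circ 2}/2$ lies in $S_n^{+}$, so Theorem~\ref{MS theorem} with $\omega\le 2$ controls the quadratic form $(r^{\circ 2})^{T}A(r^{\circ 2})$; this is the mechanism converting the degree-weighted and common-neighbour terms into a bound proportional to $m$.

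To finish, I would convert the resulting scalar estimate into the stated inequality through Theorem~\ref{rev_majorization_thm}. Setting $c=(c_v)$ with $c_v=\big\|\sum_{u\sim v}f(u)\big\|$ so that $\|c\|_2^2=\lambda_1^2+\lambda_2^2$, the goal is to exhibit a nonnegative non-increasing vector $d$ with $\|d\|_2^2=m$ that weakly majorizes (the sorted) $c$; the doubly-substochastic characterization of Lemma~\ref{Stochastic-majorization} is the natural certificate for such a majorization, and then the $p=2$ case of Theorem~\ref{rev_majorization_thm} reads off $\|c\|_2\le\|d\|_2=\sqrt m$. The equality clause of Theorem~\ref{rev_majorization_thm} should then pin the extremal graphs to complete bipartite graphs together with isolated vertices.

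\textbf{The main obstacle} I expect is the factor of two. Treating the eigenvectors independently---switching signs so each becomes nonnegative and applying Nosal separately---only yields $\lambda_1^2\le m$ and $\lambda_2^2\le m$, hence the useless $\lambda_1^2+\lambda_2^2\le 2m$; likewise the triangle inequality $\big\|\sum_{u\sim v}f(u)\big\|\le\sum_{u\sim v}r_u$ collapses the display back to $2\lambda_1^2$. The missing factor $\tfrac12$ can come only from the orthogonality $x\perp y$, yet the very switching that makes Motzkin--Straus applicable to the signed form $2\sum_{uv\in E}y_uy_v$ is precisely what destroys that orthogonality. Reconciling these competing demands---keeping nonnegativity for Theorem~\ref{MS theorem} while still exploiting $x\perp y$ through the frame identity $FF^{T}=I_2$ and the vanishing of $A\circ A^{2}$---is the genuinely delicate step, and is where the majorization inequality of Theorem~\ref{rev_majorization_thm} must do real work rather than mere bookkeeping.
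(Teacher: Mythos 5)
There is a genuine gap: the central step of your argument is announced but never performed. After the (correct) identity $\lambda_1^2+\lambda_2^2=\|FA\|_F^2=\sum_{v}\bigl\|\sum_{u\sim v}f(u)\bigr\|^2$, you say that triangle-freeness ``enters'' through $A\circ A^2=0$ and that Theorem~\ref{MS theorem} ``controls'' the resulting diagonal and common-neighbour terms, but no inequality is actually derived: the off-diagonal part $\sum_{u\ne u'}|N(u)\cap N(u')|\,\langle f(u),f(u')\rangle$ involves signed inner products of the planar vectors $f(u)$, and Motzkin--Straus (a statement about nonnegative vectors in $S_n^{+}$) does not apply to it in any way you have specified. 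Likewise, in the final step you ask for a nonnegative vector $d$ with $\|d\|_2^2=m$ weakly majorizing $c$, but you construct neither $d$ nor the doubly substochastic matrix of Lemma~\ref{Stochastic-majorization} that would certify $c\prec_w d$; and with $p=2$, Theorem~\ref{rev_majorization_thm} merely restates the comparison of sums of squares you are trying to prove, so it cannot ``do real work'' here. You yourself flag the reconciliation of nonnegativity with orthogonality as ``the genuinely delicate step''---that step is the theorem, and it is missing. (Your preliminary reductions, $\lambda_1^2\le m$ via Theorem~\ref{nikiforov_2m} and $\lambda_2\ge 0$ via the trace argument, are fine; the paper gets $\lambda_2\ge0$ more simply by interlacing with an induced $P_3$, Lemma~\ref{balanced-triangle-free-lambda2}.)

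The mechanism you are missing is the one the paper uses in its signed generalization (Theorem~\ref{nikiforov-triangle-result}), following Lin et al.: apply Theorem~\ref{rev_majorization_thm} with $p=\tfrac32$, not $p=2$, to the \emph{squares} of the eigenvalues. Assume for contradiction $\lambda_1^2+\lambda_2^2>m=\tfrac12(S^{+}+S^{-})$, where $S^{\pm}$ are the sums of squares of the positive and negative eigenvalues; this yields $\lambda_1^2+\lambda_2^2>S^{-}$, hence $(\lambda_n^2,\dots,\lambda_{n-n^{-}+1}^2)\prec_w(\lambda_1^2,\lambda_2^2,0,\dots,0)$ with the two vectors distinct. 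The $p=\tfrac32$ case of Theorem~\ref{rev_majorization_thm} then lifts this to $\lambda_1^3+\lambda_2^3>\sum_{i>n-n^{-}}|\lambda_i|^3$, i.e.\ $\sum_i\lambda_i^3>0$ after discarding the remaining nonnegative cubes (this is where $\lambda_2\ge0$ is used). Since $\sum_i\lambda_i^3=6t(G)=0$ for a triangle-free graph, this is a contradiction. The whole point is that the exponent $3/2$ converts a statement about squares into one about cubes, where the sign information from $\Tr A^3=0$ becomes available; the $p=2$ comparison carries no such information, which is why your outline cannot close.
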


The following example shows that the above theorem, as well as the conjecture of Bollob\'{a}s and Nikiforov, need not be true for signed graphs.  Let $C_5$ denote the cycle on $5$ vertices with $V(C_5) =\{v_1, v_2, v_3, v_4, v_5\}$.  Consider the signed graph $\Sigma = (C_5, \sigma)$ with  $\sigma$  assigns $-1$ to the edge between the vertices $v_1$ and $v_2$, and $1$ to the remaining edges. Then $$A(\Sigma) =
\begin{bmatrix}
    0 & -1 &  0 & 0 & 1\\
    -1 & 0 &  1 & 0 & 0\\
    0 & 1 &  0 & 1 & 0\\
    0 & 0 &  1 & 0 & 1\\
    1 & 0  &  0 & 1 & 0\\
\end{bmatrix}.
$$

The spectrum of $A(\Sigma)$ is $\{  1.618,  1.618, -0.618, -0.618, -2 \}$.  It is easy to see that $\lambda_1^2(\Sigma) + \lambda_2^2(\Sigma) =     5.2358 > 5.$ Since $r=2$ and $m =5$, so $\frac{r-1}{r} 2m = 5$. Thus Conjecture \ref{bol-niki-conj} need not be true for signed graphs.

Nevertheless, we have the following result for the signed graph.

\begin{theorem}\label{nikiforov-triangle-result}
    Let $\Sigma$ be a signed graph with $n$ vertices and $m$ edges with $n \geq 3$, and  $\Sigma$ has no  balanced triangles. If $\lambda_1(\Sigma) \geq \lambda_2(\Sigma) \geq \dots \geq \lambda_{n-1}(\Sigma) \geq \lambda_n(\Sigma)$ are the eigenvalues of  $A(\Sigma)$ and $\lambda_1(\Sigma) \geq \vert \lambda_n(\Sigma)\vert$, then $$ \lambda_1^2(\Sigma) + \lambda_2^2(\Sigma) \leq m. $$
\end{theorem}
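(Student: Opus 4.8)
The plan is to reduce the statement to the unsigned triangle-free case, where the theorem of Lin et al. (\cite[Theorem 1.2]{lineigenvalues}) is already available. Since $\Sigma$ is balanced, it is switching equivalent to the all-positive signed graph $(G,1)$ on its underlying graph $G$; equivalently, there is a signature matrix $D_\eta = \diag(\pm 1)$ with $A(\Sigma) = D_\eta A(G) D_\eta$, where $A(G)$ is the ordinary $0/1$ adjacency matrix of $G$. Conjugation by $D_\eta$ is a similarity, so $A(\Sigma)$ and $A(G)$ have identical spectra and $\lambda_i(\Sigma) = \lambda_i(G)$ for every $i$. As $\Sigma$ is triangle-free, $G$ is a triangle-free unsigned graph on $n \ge 3$ vertices with $m$ edges, whence $\lambda_1^2(G) + \lambda_2^2(G) \le m$; transporting this back through $\lambda_i(\Sigma) = \lambda_i(G)$ gives exactly $\lambda_1^2(\Sigma) + \lambda_2^2(\Sigma) \le m$.

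It is worth observing that along this route the hypothesis $\lambda_1(\Sigma) \ge |\lambda_n(\Sigma)|$ is never invoked: for a balanced graph $A(G) \ge 0$ is entrywise nonnegative, so the Perron--Frobenius theorem already forces $\lambda_1(\Sigma) = \lambda_1(G) = \rho(G) \ge |\lambda_n(G)| = |\lambda_n(\Sigma)|$. Thus balance alone yields the conclusion, and the assumption becomes essential only if one seeks the same bound for an \emph{arbitrary} triangle-free signed graph whose largest eigenvalue equals its spectral radius --- the version for which the unbalanced $C_5$ above is instructive, as there $\lambda_1 < |\lambda_n|$ and the hypothesis fails. For that stronger statement I would argue spectrally. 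Triangle-freeness yields $\Tr A(\Sigma)^3 = 0$ and $\Tr A(\Sigma)^2 = 2m$, i.e. $\sum_i \lambda_i^3(\Sigma) = 0$ and $\sum_i \lambda_i^2(\Sigma) = 2m$, so the target $\lambda_1^2 + \lambda_2^2 \le m$ is equivalent to $\lambda_1^2 + \lambda_2^2 \le \sum_{i \ge 3} \lambda_i^2$, for which it suffices to prove the stronger $\lambda_1^2 + \lambda_2^2 \le \sum_{\lambda_j < 0} \lambda_j^2$.

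Write $q_1 \ge q_2 \ge \cdots \ge q_t$ for the absolute values of the negative eigenvalues. The vanishing cube-trace gives $\lambda_1^3 + \lambda_2^3 \le \sum_j q_j^3$, while $\lambda_1 \ge |\lambda_n|$ supplies $\lambda_1 \ge q_1 \ge q_j$ for all $j$. The crux is then the elementary inequality that these two conditions force $\lambda_1^2 + \lambda_2^2 \le \sum_j q_j^2$. I would prove it by working with the cubed quantities: after shrinking the $q_j$ so that $\sum_j q_j^3 = \lambda_1^3 + \lambda_2^3$ (which only tightens the goal while preserving $q_j \le \lambda_1$), the inequalities $\lambda_1^3 \ge q_1^3$ and $\lambda_1^3 + \lambda_2^3 = \sum_j q_j^3$ show that $(\lambda_1^3, \lambda_2^3, 0, \dots, 0)$ majorizes $(q_1^3, \dots, q_t^3)$; applying Karamata's inequality to the concave map $t \mapsto t^{2/3}$ --- the concave counterpart of the majorization principle behind Theorem \ref{rev_majorization_thm} --- then yields $\lambda_1^2 + \lambda_2^2 = \sum_{i \in \{1,2\}} (\lambda_i^3)^{2/3} \le \sum_j (q_j^3)^{2/3} = \sum_j q_j^2$. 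The main obstacle is getting the direction of this majorization right: because $\lambda_1 \ge q_1$ one does \emph{not} have $(\lambda_1, \lambda_2) \prec_w (q_1, \dots, q_t)$, so a naive application of Theorem \ref{rev_majorization_thm} in the $2$-norm is unavailable and the cube constraint must be exploited as above; a short separate check also disposes of the boundary case $\lambda_2 \le 0$, where $\lambda_1 = \|q\|_3 \le \|q\|_2$ finishes directly.
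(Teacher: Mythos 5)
Your main proof rests on a misreading of the hypothesis. In this paper ``balanced triangle-free'' does not mean ``balanced and triangle-free''; it means \emph{free of balanced triangles}, i.e.\ every triangle of $\Sigma$ (if there are any) is negative, so that $t_s(\Sigma)=t^{+}(\Sigma)-t^{-}(\Sigma)\le 0$. This reading is forced by Lemma~\ref{balanced-triangle-free-lambda2}, whose proof explicitly treats the case that $\Sigma$ ``has an induced negative triangle'' (impossible for a balanced signed graph), and by the remark following the proof, which says the technique really only needs $t_s(\Sigma)\le 0$. Under this reading $\Sigma$ need not be balanced and may contain negative triangles, so the switching reduction to an unsigned triangle-free graph and the appeal to Lin et al.\ are unavailable, and the hypothesis $\lambda_1(\Sigma)\ge|\lambda_n(\Sigma)|$ is genuinely needed rather than redundant: in light of the unbalanced $C_5$ example, that spectral condition is the whole point of the theorem. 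Your first argument therefore proves only a degenerate special case of the stated result.

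The argument you sketch afterwards for your ``stronger statement'' is, in essence, the paper's actual proof, and it does cover the intended hypothesis once $\sum_i\lambda_i^3(\Sigma)=0$ is relaxed to $\sum_i\lambda_i^3(\Sigma)=6t_s(\Sigma)\le 0$ (your inequality $\lambda_1^3+\lambda_2^3\le\sum_j q_j^3$ only becomes easier). The paper runs the same computation in contrapositive form: assuming $\lambda_1^2+\lambda_2^2>m$, it derives the weak majorization $(\lambda_n^2,\lambda_{n-1}^2,\dots,\lambda_{n-n^{-}+1}^2)\prec_w(\lambda_1^2,\lambda_2^2,0,\dots,0)$ --- the first partial sum $\lambda_n^2\le\lambda_1^2$ is exactly where $\lambda_1\ge|\lambda_n|$ enters --- and applies Theorem~\ref{rev_majorization_thm} with $p=3/2$ to contradict $\Tr A(\Sigma)^3\le 0$; your Karamata step on the cubed eigenvalues with the concave map $t\mapsto t^{2/3}$ is the same inequality run forwards. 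Two things need fixing if you promote that sketch to the proof: (i) replace the vanishing cube trace by $\Tr A(\Sigma)^3=6t_s(\Sigma)\le 0$; (ii) your boundary case $\lambda_2\le 0$ is dismissed too quickly, since for $\lambda_2<0$ the estimate $\lambda_1=\Vert q\Vert_3\le\Vert q\Vert_2$ yields $\lambda_1^2\le m$ but not immediately $\lambda_1^2+\lambda_2^2\le m$; the paper avoids this case entirely via Lemma~\ref{balanced-triangle-free-lambda2}, which gives $\lambda_2(\Sigma)\ge 0$ by interlacing against a negative triangle or an induced path on three vertices.
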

We need the following result in the proof of the above theorem.
\begin{lemma}\label{balanced-triangle-free-lambda2}
    Let $\Sigma = (G, \sigma)$ be a signed graph on  $n$ vertices with $n \geq 3$, and  $\Sigma$ has no  balanced triangles. Let $\lambda_1(\Sigma) \geq \lambda_2(\Sigma) \geq \dots \geq \lambda_n(\Sigma)$ be the eigenvalues of $A(\Sigma)$. Then $$\lambda_2(\Sigma) \geq 0.$$
\end{lemma}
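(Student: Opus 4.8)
The plan is to reduce the claim to an ordinary (unsigned) statement and then exploit the triangle-free structure through eigenvalue interlacing. First I would use balance: since $\Sigma$ is balanced, it is switching equivalent to $(G,1)$, the all-positive signature of $G$. By \eqref{eq: switching equi} switching amounts to conjugation by a $\pm 1$ diagonal matrix, so $A(\Sigma)$ and $A(G)$ are similar and share the same spectrum. Consequently $\lambda_2(\Sigma) = \lambda_2(G)$, and it suffices to prove that the second largest eigenvalue of the unsigned triangle-free graph $G$ on $n \ge 3$ vertices is non-negative.

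For the unsigned statement I would split on the presence of an induced path $P_3$. If $G$ contains an induced $P_3$, whose adjacency eigenvalues are $\sqrt 2, 0, -\sqrt 2$, then $A(P_3)$ is a principal submatrix of $A(G)$, so Theorem \ref{interlacing-signed} (applied with $m=3$, taking the middle index) gives $\lambda_2(G) \ge 0$ directly: the middle eigenvalue $0$ of $P_3$ interlaces below the second largest eigenvalue of $A(G)$.

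If instead $G$ has no induced $P_3$, then every connected component of $G$ is a clique; being triangle-free, each such clique is a $K_1$ or a $K_2$, so $G$ is a disjoint union of $t$ edges and $s$ isolated vertices with $2t + s = n$. The spectrum is then the union of the component spectra, namely $t$ copies of $1$, $t$ copies of $-1$, and $s$ copies of $0$. The number of non-negative eigenvalues is $t + s = n - t \ge n/2 \ge 3/2$, hence at least two, and therefore $\lambda_2(G) \ge 0$ in this case as well.

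The reduction and the $P_3$ case are routine once the balance/switching dictionary is in hand; the only genuine content is the structural dichotomy, so the step I expect to require the most care is verifying that the absence of an induced $P_3$ forces $G$ to be a disjoint union of cliques, and hence (under triangle-freeness) a matching together with isolated vertices. That is precisely what makes the second eigenvalue computable in the remaining case. An alternative uniform route would invoke Smith's characterization of graphs with $\lambda_2 \le 0$ as the complete multipartite graphs and observe that the only triangle-free ones are complete bipartite (where $\lambda_2 = 0$); but the elementary dichotomy above avoids citing that result and keeps the argument self-contained.
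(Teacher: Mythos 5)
Your argument hinges on parsing the hypothesis as ``$\Sigma$ is balanced \emph{and} its underlying graph is triangle-free,'' which lets you switch $\Sigma$ to $(G,1)$ and reduce everything to the unsigned graph $G$. The paper intends something different: ``balanced-triangle-free'' means \emph{free of balanced (i.e.\ positive) triangles}, so $\Sigma$ need not be balanced and $G$ may well contain triangles, provided every triangle of $\Sigma$ is negative. This reading is forced by the surrounding material: the proof of Theorem~\ref{nikiforov-triangle-result} uses $\sum_i\lambda_i^3(\Sigma)=6\bigl(t^{+}(\Sigma)-t^{-}(\Sigma)\bigr)<0$, which cannot hold if $G$ has no triangles at all, and the paper's own proof of this lemma opens with the dichotomy ``either $\Sigma$ contains an induced negative triangle or $G$ is triangle-free.'' Under the intended reading your opening reduction is unavailable, and the case the paper treats first --- an induced negative triangle, whose spectrum $\{1,1,-2\}$ gives $\lambda_2(\Sigma)\geq 1$ by Theorem~\ref{interlacing-signed} --- is entirely absent from your proof. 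That is a genuine gap: you must add this branch, and in the remaining (genuinely triangle-free) branch you should note that the signature is harmless without invoking global balance, since any signed $P_3$ or signed $K_2$ is a tree and hence switching-equivalent to an all-positive one.

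With that said, your handling of the triangle-free branch is more careful than the paper's. The paper asserts that a triangle-free graph on $n\geq 3$ vertices always contains an induced $P_3$, which fails for disconnected graphs such as $K_2\cup K_1$; your dichotomy (induced $P_3$ present, versus $G$ a disjoint union of cliques and hence of $K_1$'s and $K_2$'s, with $t$ eigenvalues equal to $1$, $t$ equal to $-1$, $s$ equal to $0$, and $t+s\geq 2$) closes that hole. Grafting the negative-triangle case onto your argument would therefore yield a complete proof that is in fact tighter than the one in the paper.
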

\begin{proof}
    Since $\Sigma$ does not contain balanced triangles, either $\Sigma$  is a signed graph with an induced negative triangle or its underlying graph $G$ is triangle-free.

    If $\Sigma$ has an induced negative triangle, then its spectrum is  $\{1,1,-2\}$. By Theorem \ref{interlacing-signed}, we have
    $$ \lambda_2(\Sigma) \geq 0.$$

    If $\Sigma$ is triangle free signed graph, then it has an induced path on $3$ vertices whose spectrum is $\{\sqrt{2},0,-\sqrt{2}\}$. Now, by  Theorem \ref{interlacing-signed}, we have
    $$ \lambda_2(\Sigma) \geq 0.$$
    Thus in either of the cases, we have the result.
\end{proof}

\textbf{Proof of Theorem \ref{nikiforov-triangle-result}:}
Let  $(n^{+}, n^{-}, n^{0})$ denote the inertia of $\Sigma$, where $n^{+}$, $n^{-}$ and $n^{0}$ are the numbers (counting multiplicities) of positive, negative and zero eigenvalues of $A(\Sigma)$, respectively.

Let $\lambda_1(\Sigma) \geq \vert \lambda_n(\Sigma)\vert$.  Set $S^{+} = \lambda_1^2 + \dots + \lambda^2_{n^{+}}$ and $S^{-} = \lambda_{n-n^{-}+1}^2 + \dots + \lambda^2_{n}$. Suppose that $\lambda_1^2(\Sigma) + \lambda_2^2(\Sigma) > m$. Since $S^{+} + S^{-} = 2m$, we have
\begin{gather*}
    \lambda_1^2(\Sigma) + \lambda_2^2(\Sigma) >  m = \frac{S^{+} + S^{-}}{2}
\end{gather*}
and we get,
\begin{gather*}
    \lambda_1^2(\Sigma) + \lambda_2^2(\Sigma) \geq 2(\lambda_1^2(\Sigma) + \lambda_2^2(\Sigma)) - S^{+} > S^{-} \geq 0, \\
    \lambda_1^2(\Sigma) + \lambda_2^2(\Sigma) > S^{-}.
\end{gather*}
Now we construct two $n^{-}$-vectors $x$ and $y$ such that $x = (\lambda_1^2(\Sigma), \lambda_2^2(\Sigma),0,\dots,0)^T$, $y = (\lambda_n^2(\Sigma), \lambda_{n-1}^2(\Sigma),\dots, \lambda^2_{n-n^{-}+1}(\Sigma))^T$. Since $\lambda_1^2(\Sigma) + \lambda_2^2(\Sigma) > S^{-}$, we have $y \prec_w x$ and $x \neq y$. Set $p=\frac{3}{2}$, by Theorem \ref{rev_majorization_thm}, we have
$$\Vert x \Vert^{3/2}_{3/2} > \Vert y \Vert^{3/2}_{3/2}, $$ that is,
$$ \lambda_1^3(\Sigma) +  \lambda_2^3(\Sigma) > |\lambda_n^3(\Sigma)|+ |\lambda_{n-1}^3(\Sigma)|+ \dots + |\lambda^3_{n-n^{-}+1}(\Sigma)|.$$
Since we know that in a  signed graph without balanced triangles, $$ \sum_{i=1}^n \lambda_i^3(\Sigma) = 6t_s(\Sigma) = 6(t^{+}(\Sigma) - t^{-}(\Sigma)) < 0.$$
This implies that
\begin{gather*}
    6t_s(\Sigma) = \sum_{i=1}^n \lambda_i^3(\Sigma) >  \lambda_1^3(\Sigma) + \lambda_2^3(\Sigma) + \lambda_n^3(\Sigma)+ \lambda_{n-1}^3(\Sigma)+ \dots + \lambda^3_{n-n^{-}+1}(\Sigma) > 0.
\end{gather*}
This is a contradiction. Thus we have our desired result.  \qed

Let $t_s(\Sigma) $ denote the number of positive triangles minus the number of negative triangles in $\Sigma$. The technique used in the proof of Theorem \ref{nikiforov-triangle-result} is similar to that of Lin et al. \cite{lineigenvalues}. This technique works if the assumption of balanced triangle free is replaced by the assumption that number of positive triangles is less than the number of negative triangles, that is  $t_s(\Sigma) \leq 0$.  From this we can conclude that in a given signed graph $\Sigma$, either $\lambda_{1}(\Sigma)^2 \leq m$ or $\lambda_{n}(\Sigma)^2 \leq m$, as either $t_s(\Sigma) \leq 0$ or $t_s(-\Sigma) \leq 0$ is always true.

In the following theorem, we give a different bound for the largest eigenvalue of a signed graph in terms of the total number of triangles present in the signed graph whenever the number of positive triangles is greater than the number of negative triangles in the signed graph.
\begin{theorem}\label{diff-triangle-result}
    Let $\Sigma$ be a signed graph with $n$ vertices and $m$ edges with $n \geq 3$.  Let $\lambda_1(\Sigma)$ be the largest eigenvalue of $A(\Sigma)$. If $t_s(\Sigma) \geq 0$, then $$ \lambda_1^2(\Sigma) \leq m + \Big(6t_s(\Sigma)\Big)^{\frac{2}{3}}. $$
\end{theorem}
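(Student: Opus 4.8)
The plan is to work entirely with the power-sum symmetric functions of the spectrum, i.e.\ with traces of powers of $A(\Sigma)$. First I would record the two trace identities $\sum_{i=1}^n \lambda_i^2(\Sigma) = \Tr A(\Sigma)^2 = 2m$ and $\sum_{i=1}^n \lambda_i^3(\Sigma) = \Tr A(\Sigma)^3 = 6t_s(\Sigma)$; the second holds because each unordered triangle $T$ contributes its six cyclic/reflected orderings, each carrying the product of its three edge signs, namely $\sigma(T)$, so that $\Tr A(\Sigma)^3 = 6\sum_T \sigma(T) = 6\big(t^+(\Sigma)-t^-(\Sigma)\big)$. Since $\Tr A(\Sigma)=0$, the eigenvalues average to zero, so $\lambda_1(\Sigma)\ge 0$ and in particular $\lambda_1(\Sigma)^3 = \big(\lambda_1^2(\Sigma)\big)^{3/2}$.

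Next I would isolate the top eigenvalue. Writing $\sum_{i\ge 2}\lambda_i^2 = 2m-\lambda_1^2$ and $\sum_{i\ge 2}\lambda_i^3 = 6t_s-\lambda_1^3$, I would lower-bound the tail cube sum by
\[
\sum_{i\ge 2}\lambda_i^3 \;\ge\; -\sum_{i\ge 2}\lvert\lambda_i\rvert^3 \;\ge\; -\Big(\sum_{i\ge 2}\lambda_i^2\Big)^{3/2} \;=\; -\big(2m-\lambda_1^2\big)^{3/2},
\]
where the middle step is the monotonicity of $\ell^p$-norms ($\lVert v\rVert_3\le\lVert v\rVert_2$), a special case of the comparison used in Theorem \ref{rev_majorization_thm}. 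Combining this with $6t_s = \lambda_1^3 + \sum_{i\ge 2}\lambda_i^3$ yields the implicit inequality
\[
\big(\lambda_1^2(\Sigma)\big)^{3/2} \;\le\; 6t_s(\Sigma) + \big(2m-\lambda_1^2(\Sigma)\big)^{3/2}.
\]

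The main obstacle is to convert this implicit bound into the explicit statement $\lambda_1^2 \le m + (6t_s)^{2/3}$. I would set $x=\lambda_1^2(\Sigma)\in[0,2m]$ and $b=6t_s(\Sigma)\ge 0$, and introduce $f(x)=x^{3/2}-(2m-x)^{3/2}$, which is strictly increasing on $[0,2m]$ with $f(m)=0$; the implicit inequality reads $f(x)\le b$. It then suffices to show $f\big(m+b^{2/3}\big)\ge b$, for monotonicity of $f$ forces $x\le m+b^{2/3}$. When $b^{2/3}\le m$, setting $c=b^{2/3}$ this is the elementary inequality $(m+c)^{3/2}-(m-c)^{3/2}\ge c^{3/2}$, which I would prove by noting the left minus right side vanishes at $c=0$ and has derivative $\tfrac{3}{2}\big[(m+c)^{1/2}+(m-c)^{1/2}-c^{1/2}\big]\ge 0$, since $(m+c)^{1/2}\ge c^{1/2}$. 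When $b^{2/3}>m$, the desired bound is immediate because $m+b^{2/3}>2m\ge\lambda_1^2(\Sigma)$. Assembling the two cases gives $\lambda_1^2(\Sigma)\le m+\big(6t_s(\Sigma)\big)^{2/3}$, as claimed.
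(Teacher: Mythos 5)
Your proof is correct, and it reaches the bound by a genuinely different route than the paper. Both arguments start from the same two facts, $\Tr A(\Sigma)^2=2m$ and $\Tr A(\Sigma)^3=6t_s(\Sigma)$, and both ultimately rest on comparing the $3$-norm of (part of) the spectrum with its $2$-norm; but the paper proceeds by contradiction, splits the spectrum according to the inertia, appends $\big(6t_s(\Sigma)\big)^{2/3}$ as an extra coordinate so as to manufacture a weak majorization $y\prec_w x$ between vectors of squared eigenvalues, and then invokes the norm-monotonicity theorem (Theorem \ref{rev_majorization_thm}) with $p=3/2$ to reach $6t_s(\Sigma)>6t_s(\Sigma)$. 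You instead bound the whole tail at once via the elementary inequality $\sum_{i\ge2}\lvert\lambda_i\rvert^3\le\big(2m-\lambda_1^2\big)^{3/2}$, obtain the implicit inequality $f(\lambda_1^2)\le 6t_s(\Sigma)$ for $f(x)=x^{3/2}-(2m-x)^{3/2}$, and invert it by an explicit monotonicity computation, including the small inequality $(m+c)^{3/2}-(m-c)^{3/2}\ge c^{3/2}$ and the separate easy case $b^{2/3}>m$. What your version buys is self-containedness and transparency: it avoids the majorization machinery entirely and makes visible the inversion step that the paper hides inside its choice of the vector $y$; it also localizes exactly where the exponent $2/3$ comes from. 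What the paper's version buys is brevity once Theorem \ref{rev_majorization_thm} is in hand, a uniform template shared with the proof of Theorem \ref{nikiforov-triangle-result}, and a marginally tighter intermediate step (it discards only the positive eigenvalues other than $\lambda_1$, whereas your tail bound wastes the positive part of the tail), though this makes no difference to the stated conclusion. All steps in your argument check out, including the sign observation $\lambda_1(\Sigma)\ge0$ needed to write $\lambda_1^3=(\lambda_1^2)^{3/2}$.
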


\begin{proof}
    Let $(n^{+}, n^{-}, n^{0})$ denote the inertia of $\Sigma$, where $n^{+}$, $n^{-}$ and $n^{0}$ are the numbers (counting multiplicities) of positive, negative and zero eigenvalues of $A(\Sigma)$, respectively.

    Let $S^{+} = \lambda_1^2 + \dots + \lambda^2_{n^{+}}$ and $S^{-} = \lambda_{n-n^{-}+1}^2 + \dots + \lambda^2_{n}$. Suppose that $\lambda_1^2(\Sigma) > m +\Big(6t_s(\Sigma)\Big)^{\frac{2}{3}}$. Since $S^{+} + S^{-} = 2m$, we have
    \begin{gather*}
        \lambda_1^2(\Sigma)  >  m + \Big(6t_s(\Sigma)\Big)^{\frac{2}{3}} = \frac{S^{+} + S^{-}}{2} + \Big(6t_s(\Sigma)\Big)^{\frac{2}{3}},
    \end{gather*}
   which implies that $2\bigg(\lambda_1^2(\Sigma) - \Big(6t_s(\Sigma)\Big)^{\frac{2}{3}}\bigg) - S^{+} > S^{-} .$
   Now, it is easy to see that
         $ \lambda_1^2(\Sigma) - \Big(6t_s(\Sigma)\Big)^{\frac{2}{3}} \geq 2\bigg(\lambda_1^2(\Sigma) - \Big(6t_s(\Sigma)\Big)^{\frac{2}{3}}\bigg) - S^{+}$.   Thus \begin{gather*}
        \lambda_1^2(\Sigma) - \Big(6t_s(\Sigma)\Big)^{\frac{2}{3}} > S^{-}.
    \end{gather*}
    Define two $(n^{-}+ 1)$-vectors $x$ and $y$ as follows:  $$x = (\lambda_1^2(\Sigma), 0,0,\dots,0)^T, ~~y = (\lambda_n^2(\Sigma), \lambda_{n-1}^2(\Sigma),\dots, \lambda^2_{n-n^{-}+1}(\Sigma), \Big(6t_s(\Sigma)\Big)^{\frac{2}{3}})^T.$$ Since $  \lambda_1^2(\Sigma) - \Big(6t_s(\Sigma)\Big)^{\frac{2}{3}} > S^{-}$, we have $y \prec_w x$ and $x \neq y$. Set $p=\frac{3}{2}$, by Theorem \ref{rev_majorization_thm}, we have
    $$\Vert x \Vert^{3/2}_{3/2} > \Vert y \Vert^{3/2}_{3/2}, $$ that is,
    $$ \lambda_1^3(\Sigma) > |\lambda_n^3(\Sigma)|+ |\lambda_{n-1}^3(\Sigma)|+ \dots + |\lambda^3_{n-n^{-}+1}(\Sigma)| + 6t_s(\Sigma).$$
    Since we know that in a signed graph $ \sum_{i=1}^n \lambda_i^3(\Sigma) = 6t_s(\Sigma)$,
  we get
    \begin{gather*}
        6t_s(\Sigma) = \sum_{i=1}^n \lambda_i^3(\Sigma) >  \lambda_1^3(\Sigma) + \lambda_n^3(\Sigma)+ \lambda_{n-1}^3(\Sigma)+ \dots + \lambda^3_{n-n^{-}+1}(\Sigma)> 6t_s(\Sigma).
    \end{gather*}
    This is a contradiction. Thus we have our desired result.
\end{proof}

\begin{remark}{\rm
Note that Theorem \ref{diff-triangle-result}, in case of unsigned graphs, simplifies to the inequality
$$ \lambda_1^2(G) \leq m + \Big(6t(G)\Big)^{\frac{2}{3}}, $$
which is equivalent to Theorem \ref{nikiforov_2m} in case of triangle free graphs, but it is different when the graph is not triangle free.}
\end{remark}

\section{Signed walks and largest eigenvalue}\label{signedwalk-largesteigen}

A walk in a signed graph is positive if the number of its negative edges is even (including the repeating edges); otherwise, it is negative. In the same way we decide whether a cycle in a signed graph is positive or negative. Let $w^{+}_k(i,j)$ denote the number of positive walks of length $k-1$ starting at $i$ and terminating at $j$. Define $w^+_k(i) = w^{+}_k(i,i)$ for each $1 \leq i \leq n$.  Similarly $w^{-}_k(i,j)$ and $w^-_k(i)$ are defined in terms of the negative walks. Let $w_r^{+}(\Sigma)$ and $w_r^{-}(\Sigma)$ denote  the number of positive walks and negative walks in signed graph $\Sigma$ of length $r-1$,  respectively.

For a signed graph $\Sigma  = (G, \sigma)$,  the $(i,j)$-th entry of $A(\Sigma)^{r-1}$ is the difference between the number of positive and negative walks of length $r-1$ between the vertices $v_i$ and $v_j$ in $\Sigma$. Hence the expression $ e^TA(\Sigma)^{r-1}e$  is the difference between the number of positive and negative walks of length $r-1$ in $\Sigma$. Define $w_k(\Sigma) = e^TA(\Sigma)^{r-1}e $.

The total number of positive and negative walks in $\Sigma$ need not be switching invariant. We define \textit{$r$-frustration index }of signed graph $\Sigma$, denoted by $\epsilon_r(\Sigma)$, to be the minimum number of negative walks of length $r-1$ in the switching equivalence class of $\Sigma$. That is, $$ \epsilon_r(\Sigma) = \min\limits_{\Sigma' \sim \Sigma} \ w^{-}_r(\Sigma) .$$ Note that, $\epsilon_2(\Sigma) = \epsilon(\Sigma)$, and thus $\epsilon_r(\Sigma)$ generalizes the notion of frustration index to signed walks of higher lengths.

\begin{theorem}\label{signed-walks-gen}
    Let $\Sigma = (G,\sigma)$ be a signed graph with $n$ vertices and $m$ edges. Let $A(\Sigma)$ be its signed adjacency matrix and $\omega_b(\Sigma)$ be its balanced clique number. Let $w_r(G)$ denote the total number of walks in $G$ and $\epsilon_r(\Sigma)$ be the $r$-frustration index of \ $\Sigma$. Let $\lambda_1(\Sigma) $ be the largest eigenvalue of  $A(\Sigma)$.
    Then $$ \lambda^r_1(\Sigma) \leq \big(w_r(G) - \epsilon_r(\Sigma) \big)\bigg(1-\frac{1}{\omega_b(\Sigma)}\bigg). $$
\end{theorem}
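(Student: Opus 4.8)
The plan is to mirror the switching argument used in the proof of Theorem \ref{nikiforov-signed-general}, but to invoke the unsigned walk bound (Theorem \ref{nikiforov-walks}) in place of the Cauchy--Schwarz/Motzkin--Straus step. I would take a unit eigenvector $x$ with $\Vert x\Vert_2=1$ for $\lambda_1(\Sigma)$ and switch $\Sigma$ to $\Sigma'=(G,\sigma')$ using $\eta(v_i)=\sgn(x_i)$ (and $\eta(v_i)=1$ when $x_i=0$), exactly as before. Then $y=(\vert x_1\vert,\dots,\vert x_n\vert)\ge 0$ satisfies $A(\Sigma')y=\lambda_1(\Sigma)y$, so $\lambda_1(\Sigma')=\lambda_1(\Sigma)$, and switching preserves the balanced clique number, so $\omega_b(\Sigma')=\omega_b(\Sigma)$. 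Let $\Sigma'_+$ denote the unsigned graph formed by the positive edges of $\Sigma'$. Since $A(\Sigma'_+)\ge A(\Sigma')$ entrywise and $y\ge 0$,
\[
\lambda_1(\Sigma)=y^T A(\Sigma')y \le y^T A(\Sigma'_+)y \le \lambda_1(\Sigma'_+),
\]
the last step by the Rayleigh quotient. Because $A(\Sigma)$ has zero trace and at least one edge, $\lambda_1(\Sigma)\ge 0$, so $0\le\lambda_1(\Sigma)\le\lambda_1(\Sigma'_+)$ and hence $\lambda_1^r(\Sigma)\le\lambda_1^r(\Sigma'_+)$ for every $r>0$.

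Next I would apply the unsigned walk bound, Theorem \ref{nikiforov-walks}, to the ordinary graph $\Sigma'_+$, giving
\[
\lambda_1^r(\Sigma'_+) \le w_r(\Sigma'_+)\Big(1-\tfrac{1}{\omega(\Sigma'_+)}\Big).
\]
It then remains to control the two factors by their signed counterparts. For the clique number, every clique of $\Sigma'_+$ is an all-positive, hence balanced, clique of $\Sigma'$, so $\omega(\Sigma'_+)\le\omega_b(\Sigma')=\omega_b(\Sigma)$. For the walk count, each walk of length $r-1$ in $\Sigma'_+$ uses only positive edges and is therefore a positive walk of $\Sigma'$, whence $w_r(\Sigma'_+)\le w_r^+(\Sigma')$. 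Since the total number of length-$(r-1)$ walks is switching-invariant, $w_r^+(\Sigma')=w_r(G)-w_r^-(\Sigma')$, and as $\Sigma'\sim\Sigma$ the definition of the $r$-frustration index gives $w_r^-(\Sigma')\ge\epsilon_r(\Sigma)$, so $w_r(\Sigma'_+)\le w_r(G)-\epsilon_r(\Sigma)$. Both estimated factors are nonnegative, so multiplying them and chaining the inequalities yields
\[
\lambda_1^r(\Sigma) \le \big(w_r(G)-\epsilon_r(\Sigma)\big)\Big(1-\tfrac{1}{\omega_b(\Sigma)}\Big).
\]

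The routine parts (the switching identity and the Rayleigh estimate) are as in the earlier proofs. The step I expect to require the most care is the walk-counting inequality $w_r(\Sigma'_+)\le w_r(G)-\epsilon_r(\Sigma)$: one must check that walks confined to the positive subgraph are genuinely positive walks of $\Sigma'$ (so $w_r(\Sigma'_+)\le w_r^+(\Sigma')$ holds, noting it may be strict because positive walks can also traverse an even number of negative edges), and that the minimum defining $\epsilon_r(\Sigma)$ is taken over the \emph{whole} switching class, so that $w_r^-(\Sigma')\ge\epsilon_r(\Sigma)$ is legitimate for the particular representative $\Sigma'$ produced by the sign switching. A minor but necessary point to record is the nonnegativity $\lambda_1(\Sigma)\ge 0$, used to raise $0\le\lambda_1(\Sigma)\le\lambda_1(\Sigma'_+)$ to the $r$-th power when $r$ is even.
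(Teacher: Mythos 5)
Your proposal is correct and follows essentially the same route as the paper: switch via the sign pattern of a top eigenvector, pass to the all-positive subgraph $\Sigma'_{+}$ to get $\lambda_1(\Sigma)\le\lambda_1(\Sigma'_{+})$, apply Theorem \ref{nikiforov-walks} there, and then bound $\omega(\Sigma'_{+})$ and $w_r(\Sigma'_{+})$ by $\omega_b(\Sigma)$ and $w_r(G)-\epsilon_r(\Sigma)$. Your write-up is in fact slightly more careful than the paper's, since you record the nonnegativity of $\lambda_1(\Sigma)$ needed to raise the inequality to the $r$-th power and spell out the walk-counting chain $w_r(\Sigma'_{+})\le w_r^{+}(\Sigma')=w_r(G)-w_r^{-}(\Sigma')\le w_r(G)-\epsilon_r(\Sigma)$, which the paper asserts without justification.
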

\begin{proof}
    Let $\Sigma' = (G, \sigma')$ be the signed graph obtained $\Sigma$ by applying the switching function $\eta$ defined in Theorem  \ref{nikiforov-signed-general}.    From the equation \ref{stanic-lemma}, we have $\lambda_1(\Sigma) \leq \sum_{i \sim j,\ \sigma'_{ij}=1} 2y_iy_j.$
    Let $\Sigma'_{+}$ be the signed subgraph of $\Sigma'$ spanned by the set of edges $\sigma'_{ij} = 1$. Let the number of $r$-walks in the all positive signed subgraph $\Sigma'_{+}$ be $w_r(\Sigma'_{+})$ and its clique number be $\omega(\Sigma'_{+})$. Then $        \lambda_1(\Sigma) \leq \sum_{i \sim j,\ \sigma'_{ij}=1} 2y_iy_j \leq \lambda_1(\Sigma'_{+})$. By Theorem \ref{nikiforov-walks}, we have
    \begin{gather*}
        \lambda_1^r(\Sigma) \leq \lambda_1^r(\Sigma'_{+}) \leq w_r(\Sigma'_{+})\bigg(1 - \frac{1}{\omega(\Sigma'_{+})}\bigg).
    \end{gather*}
    Since $\Sigma'_{+}$ is all positive signed subgraph of $\Sigma'$, we have $w_r(\Sigma'_{+}) \leq w_r(G) - \epsilon_r(\Sigma)$ and $\omega_{+} \leq \omega_{b}(\Sigma)$. Thus
    $$ \lambda^r_1(\Sigma) \leq \big(w_r(G) - \epsilon_r(\Sigma)\big)\bigg(1 - \frac{1}{\omega_{b}(\Sigma)}\bigg). \ $$
\end{proof}

Note when $r = 1$, we obtain Theorem \ref{Balance-clique-n} of  Wang et al \cite{wang2021eigenvalues}.

%\textbf{I think we could remove the next corollary. }As a corollary of the above theorem, we can also get bounds for the least eigenvalues of the adjacency matrices of signed graphs.
%
%\begin{corollary}
%    Let $\Sigma = (G,\sigma)$ be a signed graph with $n$ vertices and $m$ edges. Let $A(\Sigma)$ be its signed adjacency matrix and $\omega_b(\Sigma)$ be its balanced clique number. Let $w_r(G)$ denote the total number of walks in $G$ and $\epsilon_r(\Sigma)$ be the $r$-frustration index of \ $\Sigma$. Let $\lambda_n(\Sigma)$ be the least eigenvalue of  $A(\Sigma)$.
%    Then $$ \vert\lambda^r_n(\Sigma)\vert \leq \big(w_r(G) - \epsilon_r(-\Sigma) \big)\bigg(1-\frac{1}{\omega_b(-\Sigma)}\bigg). $$
%\end{corollary}
%\begin{proof}
%Note that $\lambda_{1}(-\Sigma) = \vert\lambda_{n}(\Sigma)\vert$.  By Theorem \ref{signed-walks-gen},  we get the result.
%\end{proof}

Our next objective is to extend the left-hand side of the inequality of Theorem \ref{nikiforov-walks}. Before that, we give a theorem that will be helpful.
\begin{theorem}\label{signed-walks-lambdas}
    Let $\Sigma = (G, \sigma)$ be a signed graph of order $n$ with eigenvalues $\lambda_{1}(\Sigma) \geq \lambda_{2}(\Sigma) \geq \dots \geq \lambda_{n}(\Sigma)$ and $u_1,\dots,u_n$ be the corresponding
    orthogonal unit eigenvectors. For every $1 \leq i \leq n$, let $u_i = (u_{i1},\dots, u_{in})$ and $c_i = \big(\sum_{j=1}^nu_{ij}\big)^2$. Then
    $$ w_k(\Sigma) = \sum_{i=1}^n c_i \lambda^{k-1}_i(\Sigma).$$
\end{theorem}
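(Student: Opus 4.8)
The plan is to prove this via the spectral decomposition of the symmetric matrix $A(\Sigma)$, reducing the quadratic form $e^{T}A(\Sigma)^{k-1}e$ to a sum of squared projections onto the eigenbasis. Recall from the preceding discussion that the total signed walk count is defined by $w_k(\Sigma) = e^{T}A(\Sigma)^{k-1}e$, where $e = (1,\dots,1)^{T}$, since the $(i,j)$-th entry of $A(\Sigma)^{k-1}$ records the difference between positive and negative walks of length $k-1$ from $v_i$ to $v_j$. So the entire task is to evaluate this single scalar.

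First I would invoke the fact that $A(\Sigma)$ is a real symmetric matrix, so by the spectral theorem the orthonormal eigenvectors $u_1,\dots,u_n$ furnish a resolution of the identity and a spectral decomposition
$$ A(\Sigma) = \sum_{i=1}^{n} \lambda_i(\Sigma)\, u_i u_i^{T}. $$
Using orthonormality, $u_i^{T}u_j = \delta_{ij}$, raising to the power $k-1$ collapses all cross terms and yields
$$ A(\Sigma)^{k-1} = \sum_{i=1}^{n} \lambda_i^{k-1}(\Sigma)\, u_i u_i^{T}. $$

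Next I would sandwich this between $e^{T}$ and $e$ and push the scalars through, obtaining
$$ w_k(\Sigma) = e^{T}A(\Sigma)^{k-1}e = \sum_{i=1}^{n} \lambda_i^{k-1}(\Sigma)\, (e^{T}u_i)(u_i^{T}e) = \sum_{i=1}^{n} \lambda_i^{k-1}(\Sigma)\, (e^{T}u_i)^{2}. $$
Finally I would identify the coefficient: since $e^{T}u_i = \sum_{j=1}^{n} u_{ij}$, we have $(e^{T}u_i)^{2} = \big(\sum_{j=1}^{n} u_{ij}\big)^{2} = c_i$, which gives $w_k(\Sigma) = \sum_{i=1}^{n} c_i \lambda_i^{k-1}(\Sigma)$ exactly as claimed. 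An equivalent route, which I might prefer for transparency, is to expand $e$ directly in the eigenbasis as $e = \sum_i a_i u_i$ with $a_i = u_i^{T}e$, and then compute $e^{T}A(\Sigma)^{k-1}e = \sum_i a_i^{2}\lambda_i^{k-1}(\Sigma)$ using orthonormality, noting $a_i^{2} = c_i$.

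There is no genuine obstacle here: the statement is a direct consequence of symmetry of $A(\Sigma)$ and orthonormality of the eigenvectors, so the only point requiring any care is bookkeeping—ensuring the eigenvectors are taken to be an orthonormal set (as hypothesized) so that the cross terms $u_i^{T}u_j$ vanish when the power is taken, and correctly matching $(e^{T}u_i)^{2}$ with the defined quantity $c_i$.
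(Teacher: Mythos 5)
Your proof is correct and follows essentially the same route as the paper: both arguments apply the spectral theorem to the symmetric matrix $A(\Sigma)$, diagonalize $A(\Sigma)^{k-1}$, and identify $(e^{T}u_i)^2$ with $c_i$; you write the decomposition as a sum of rank-one projectors $\sum_i \lambda_i u_iu_i^T$ while the paper writes it in the equivalent matrix form $(Ue)^T D (Ue)$. No gaps.
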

\begin{proof}
    Since $A(\Sigma)$ is a symmetric matrix, by the definition of $w_k(\Sigma)$ and the spectral theorem, we have
    $$ w_k(\Sigma) = e^TA(\Sigma)^{k-1}e = (Ue)^T D(\Sigma) (Ue) .$$
    It is easy to see that, $Ue = (\sum_{j=1}^nu_{1j},\dots,\sum_{j=1}^nu_{nj})^T$, and thus the result follows.
\end{proof}

\begin{theorem}
    Let $\Sigma = (G,\sigma)$ be a signed graph with $n$ vertices and $m$ edges. Let $A(\Sigma)$ be its signed adjacency matrix. Let $w_r(\Sigma)$ denote the number of signed $r$-walks in $\Sigma$. Let $\lambda_1(\Sigma) \geq \lambda_2(\Sigma) \geq \dots \geq \lambda_n(\Sigma)$ be the eigenvalues of the adjacency matrix of $\Sigma$. Let $\rho(\Sigma)$ be the spectral radius of $A(\Sigma)$.
    Then for all $r>0$ and odd $q>0$, $$ \frac{w_{q+r}(\Sigma)}{w_{q}(\Sigma)} \leq \rho^r(\Sigma). $$
\end{theorem}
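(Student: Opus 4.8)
The plan is to reduce the statement to the eigenvalue expansion supplied by Theorem~\ref{signed-walks-lambdas} and then exploit the parity of $q$. Writing $\lambda_i = \lambda_i(\Sigma)$ and $c_i = \big(\sum_{j}u_{ij}\big)^2 \ge 0$, that theorem gives $w_q(\Sigma) = \sum_{i=1}^n c_i \lambda_i^{q-1}$ and $w_{q+r}(\Sigma) = \sum_{i=1}^n c_i \lambda_i^{q+r-1}$, so the asserted inequality is equivalent to
\begin{gather*}
\sum_{i=1}^n c_i \lambda_i^{q+r-1} \le \rho^r(\Sigma) \sum_{i=1}^n c_i \lambda_i^{q-1}.
\end{gather*}
The entire argument amounts to establishing this comparison term by term. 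Compared with Nikiforov's proof of the left-hand inequality in Theorem~\ref{nikiforov-walks}, the only change is that the dominant eigenvalue $\lambda_1$ must be replaced by the spectral radius $\rho(\Sigma)$, since in a signed graph the extreme eigenvalue may be negative.

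First I would note that, because $q$ is odd, the exponent $q-1$ is even, so $\lambda_i^{q-1} \ge 0$ for every $i$; hence each coefficient $c_i \lambda_i^{q-1}$ is non-negative, and these serve as the weights in a weighted comparison. Next, I factor $\lambda_i^{q+r-1} = \lambda_i^{q-1}\,\lambda_i^r$ and control the stray factor by the spectral radius: regardless of the parity of $r$ one has $\lambda_i^r \le |\lambda_i|^r \le \rho^r(\Sigma)$, since $\rho(\Sigma) = \max_i |\lambda_i|$ and $x \mapsto x^r$ is increasing on $[0,\infty)$. Multiplying this bound by the non-negative weight $c_i\lambda_i^{q-1}$ preserves the inequality, yielding $c_i\lambda_i^{q+r-1} \le \rho^r(\Sigma)\, c_i \lambda_i^{q-1}$ for each $i$. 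Summing over $i$ gives $w_{q+r}(\Sigma) \le \rho^r(\Sigma)\, w_q(\Sigma)$, and dividing by $w_q(\Sigma)$ delivers the theorem.

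The points needing care, rather than any genuine obstacle, are the two sign issues and the well-definedness of the quotient. The non-negativity of the weights rests entirely on $q$ being odd, which is precisely the hypothesis; this is exactly the step that would collapse for even $q$, where $\lambda_i^{q-1}$ can be negative and the termwise comparison fails. The bound $\lambda_i^r \le \rho^r(\Sigma)$ is uniform in the parity of $r$ because we pass through $|\lambda_i|^r$, so no case distinction on $r$ is required. Finally, dividing by $w_q(\Sigma)$ presupposes $w_q(\Sigma) > 0$; the expansion $w_q(\Sigma) = \sum_i c_i \lambda_i^{q-1}$ already shows it is a sum of non-negative terms, hence non-negative, and it is strictly positive in the non-degenerate setting in which the ratio is meaningful.
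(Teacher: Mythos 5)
Your proposal is correct and follows essentially the same route as the paper: both start from the eigenvalue expansion $w_k(\Sigma)=\sum_i c_i\lambda_i^{k-1}$ of Theorem~\ref{signed-walks-lambdas} and use that $q-1$ is even to make the weights $c_i\lambda_i^{q-1}$ non-negative, together with $|\lambda_i|\le\rho(\Sigma)$. The paper phrases this as normalizing the ratio by $\rho^r(\Sigma)$ while you compare termwise before summing, but these are the same argument; your explicit remark that one must assume $w_q(\Sigma)>0$ is a point the paper leaves implicit.
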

\begin{proof}
    From Theorem \ref{signed-walks-lambdas}, we have
    \begin{align*}
        \frac{w_{q+r}(\Sigma)}{w_{q}(\Sigma)} &= \frac{\sum_{i=1}^n c_i \lambda^{q+r-1}_i(\Sigma)}{\sum_{i=1}^n c_i \lambda^{q-1}_i(\Sigma)} \\ &= \rho^r(\Sigma)  \frac{\sum_{i=1}^n c_i
            \big(\frac{\lambda_i(\Sigma)}{\rho(\Sigma)}\big)^{q+r-1}}{\sum_{i=1}^n c_i \big(\frac{\lambda_i(\Sigma)}{\rho(\Sigma)}\big)^{q-1}} \\
        &\leq \rho^r(\Sigma).
    \end{align*}
\end{proof}

    % -------------References-----------------

    \bibliographystyle{amsplain}
    \bibliography{references}

\end{document}